\newcommand{\rmd}{\mathrm{d}}
\newcommand{\Jac}{\mathrm{Jac}}
\DeclareMathOperator{\Complex}{\mathbb{C}}
\DeclareMathOperator{\Integer}{\mathbb{Z}}
\DeclareMathOperator{\wgt}{wgt}
\newtheorem{theo}{Theorem}
\newtheorem{prop}{Proposition}
\newtheorem{cor}{Corollary}
\theoremstyle{definition}
\newtheorem{rem}{Remark}
\theoremstyle{plain}
\title[]{Division polynomials in Mumford coordinates}
\author{J Bernatska}
\address{}
\email{}
\date{\today}
\begin{document}
 
\maketitle 
\begin{abstract}
An effective method of computing division polynomials in terms of  Mumford coordinates is presented.
As an example, division polynomials for $3$- and $4$-torsion divisors on a genus two curve
are obtained explicitly in terms of  Mumford coordinates, and $x$-, $y$-coordinates of the support of
torsion divisors.
As a result, $n$-torsion divisors on a given curve can be computed directly from the division polynomials.
Alternatively, these divisors are obtained by solving the Jacobi inversion problem at points of the Jacobian variety
of order $n$.
\end{abstract}

\section{Introduction}
The research projects like Crypto-Math CREST \cite{MMNGC2018}, and the stream of publications in 
IACR Cryptology ePrint Archive show that the
isogeny-based cryptography is considered as a part of the
next-generation cryptography.
And genus two curves  have essential benefits in public-key cryptography, as proven, for example, in
\cite{BL2014,BCHL2013}

Below, we focus on division polynomials, which arose in the elliptic case to define
$n$-torsion points on the curve. Generalizations to higher genera are suggested in 
\cite{Cant1994,Kanayama,Uchida}.

In \cite{Cant1994},  polynomials which define the reduced representation of  
divisors $n D$,  $D=(x,y)-\infty$,
are constructed, and called division polynomials. 
(By $n D$ the sum $D+D+\cdots +D$ with $n$ terms is denoted, and we adopt this notation.)
On the contrary, 
computations show that $n$-torsion divisors on a hyperelliptic curve are non-special,
except the case of $2$-torsion divisors,
and no divisors contain repeated points. Thus, divisors of the form $k \cdot (x,y)$ are not $n$-torsion for any $n$.

In \cite{Kanayama}, we find a division polynomial in the form $\phi_n(u) = \sigma(n u)/ \sigma(u)^{n^2}$,
$u\in \Jac(\mathcal{C})\backslash (\sigma)_0$, 
where $(\sigma)_0 = \{u\in \Jac(\mathcal{C}) \mid \sigma(u)=0\}$
denotes the theta divisor.
Polynomials $\phi_n$ are expressible in terms of fixed set of $\wp$-functions.
A curve $\mathcal{C}$ of genus two  is considered. 
The expression for $\phi_2$ is adopted from \cite[p.\,100]{bakerMP}. 
For $n\geqslant 3$ a recurrence relation is suggested,
as well as a method of constructing $\phi_n$. According to \cite[Theorem\,7]{Kanayama},
$n$-torion points on $\Jac(\mathcal{C})[n] \backslash ((\sigma)_0 \cap \Jac(\mathcal{C})[n])$
are the common zeros of the equations of $\Jac(\mathcal{C})$,  
$\phi_n$, $\partial_{u_1}\phi_n$, $\partial^2_{u_1}\phi_n$, where $\partial_u \equiv \partial/\partial u$.

In \cite{Uchida}, Kanayama's division polynomials are generalized to 
the case of a general hyperelliptic Jacobian, though all examples do not exceed genus two, and 
only the case of $n=2$ in genus two is considered in detail.
A more general case is proven for special divisors composed from one point only,
similar to the determinant formulas adopted from \cite{Onishi}.

The method suggested in \cite{Kanayama} and developed in \cite{Uchida}
requires essential efforts, and the knowledge of identities for 
$\wp$-functions associated with the curve in question.
Moreover, obtaining  multiplication
formulas of the form $\sigma(n u)/ \sigma(u)^{n^2}$ is a complicate problem itself.
Below, a simpler method of obtaining division polynomials is suggested. 
Under division polynomials we will understand the polynomials which define $n$-torsion divisors
on a curve $\mathcal{C}$.
The method leads to purely algebraic computations, though it is based on the structure of the field
of abelian $\wp$-functions associated with $\mathcal{C}$. 
As a result, $x$-, $y$-coordinates of the support of $n$-torsion divisors
are obtained.

There is no problem to find $n$-torsion points on $\Jac(\mathcal{C})$  ---
these are points of order~$n$. The problem arises when $x$-, $y$-coordinates  of 
the Abel pre-images of such points are required. 
There exists a direct way to compute the non-special Abel pre-image from a given
point of $\Jac(\mathcal{C})$, based on a solution of the Jacobi inversion problem.
Such a solution is known in terms of  $\wp$-functions, see
\cite[\S\,216]{bakerAF} for hyperelliptic curves, and \cite{BLJIP22} for non-hyperelliptic curves.
This way became feasible due to the progress 
in computing uniformization of plane algebraic curves,
see \cite{BerCalc24}.
In the computations presented below, we use this way for verification.

The proposed method of obtaining division polynomials 
is based on the addition and duplication laws,
written in terms of the Mumford coordinates of $n$-torsion divisors.
As will be shown, defining  $n$-torsion divisors on a genus $g$ curve
requires not more than $g$ polynomials. The number of polynomials
 decreases in the case of even $n$, say $n = 2k$, when $k D$ is a special divisor. 
Note, that $k D$ produced from an $n$-torsion divisor $D$ is $2$-torsion, if $n=2k$.

For the sake of compact expressions, we work with a genus two curve $\mathcal{C}$. 
Addition on the Jacobian variety $\Jac(\mathcal{C})$ of such a curve is widely known, see \cite{Cant1987,CL2011}.
This approach arises from Cantor's algorithm, and uses
 Mumford's representation of divisors on a hyperelliptic curve $f(x,y)=0$
by interpolation polynomials in $x$-coordinate. 

Below, the addition law is adopted from \cite{bl2005}. 
It  is based on the theory of polynomial functions on $\mathcal{C}$, 
which form a ring $\mathfrak{P}(\mathcal{C}) = \Complex[x,y] / f(x,y;\lambda)$.
The structure of the ring
 is closely connected to the field of $\wp$-functions associated with $\mathcal{C}$.
Polynomial  functions are composed of  monomials
in $x$, $y$ arranged by the Sato weight. The theory of polynomial functions on $\mathcal{C}$
contains an elegant technique 
which replaces the mentioned interpolation polynomials, and makes the addition law
explicit, and easy to derive.

The paper is organized as follows.
In Section 2  basic notions are recalled and notations are introduced.
In Section 3 the ring $\mathfrak{P}(\mathcal{C})$ of polynomial functions 
on a hyperelliptic curve $\mathcal{C}$ is described in detail;
 these functions are used to define  divisors, and implement
addition and inversion on the curve. In Section~4
the addition and duplication laws are derived by means 
of polynomial functions from $\mathfrak{P}(\mathcal{C})$.
The cases of special divisors are also addressed.
Section~5 is devoted to $n$-torsion divisors, and derivation of division polynomials in the Mumford coordinates,
and $x$-, $y$-coordinates.

The method is illustrated by obtaining the division polynomials for $3$- and $4$-torsion divisors.
Computations of these division polynomials on a given curve,
as well as the corresponding torsion divisors, are implemented in Wolfram Mathematica 12, 
see \texttt{https://community.wolfram.com/groups/-/m/t/3338527}.

The proposed method can be easily extended to a hyperelliptic curve of  any genus,
as well as to non-hyperelliptic curves.

\section{Preliminaries}

\subsection{Hyperelliptic curve of genus two}
Let a genus two curve $\mathcal{C}$ be defined by the equation
\begin{align}\label{C25Eq}
0 = f(x,y;\lambda) &= -y^2  + \mathcal{P}(x) \\
&= -y^2  + x^5 + \lambda_2 x^4 + \lambda_4 x^3 + \lambda_6 x^2 + \lambda_8 x + \lambda_{10}, \notag
\end{align}
which we call the canonical form, unlike \cite{PMACG1996}, but 
according to the theory of multi-variable $\sigma$-functions \cite{bel2012}.

The canonical form of a plane algebraic curve $\mathcal{C}$, also known as $(n,s)$-curve, 
$\gcd(n,s)=1$, see \cite{bel99},
is the Weierstass canonical form described in \cite[\S\S 60–63]{bakerAF}.
Every $(n,s)$-curve is equipped with 
a unique modular-invariant, entire  $\sigma$-function, which  
 has a representation as an analytic series in $u\in \Jac(\mathcal{C})$
and parameters $\lambda$ of~$\mathcal{C}$, see \cite[Ch.\;9]{bel2012}.
The differential field of abelian functions generated from $\sigma$-function, which we call $\wp$-functions,
gives rise to an algebraic model of $\Jac(\mathcal{C})$, see \cite[Ch.\,3]{bel2012}, and 
the addition law on $\mathcal{C}$, see \cite{bl2005}.

We work over the field $\Complex$ of complex numbers, $(x,y) \in \Complex^2$;
and assume that the curve $\mathcal{C}$ defined by \eqref{C25Eq} is not degenerate, 
that is $\lambda \in \Complex^5 \backslash 
\mathrm{Dscr} $, where $\mathrm{Dscr}$ consists of such $\lambda$ that 
the genus of  $\mathcal{C}$ reduces to $1$ or $0$. 
In more detail strata of the space 
 of parameters $\lambda$ are described in \cite{BerLey2019}.

The theory of $\sigma$- and $\wp$-functions associated with $(n,s)$-curves respects the Sato weight, 
which shows the order of zero at infinity\footnote{
The infinity point on an $(n,s)$-curve is a Weierstrass point, 
and a branch point where all $n$ sheets wind.}.
Due to the leading terms $-y^2  + x^5$ with co-prime exponents,
the curve \eqref{C25Eq} admits the following expansion about infinity in a local parameter~$\xi$:
\begin{gather}\label{param}
 \begin{split}
 x = \xi^{-2},\quad y &=
 \xi^{-5} \big(1+ \tfrac{1}{2} \lambda_2 \xi^2
+ \tfrac{1}{2} (\lambda_4 - \tfrac{1}{4} \lambda_2^2)\xi^4 
+ \tfrac{1}{2} (\lambda_6 - \tfrac{1}{2} \lambda_2 \lambda_4 + \tfrac{1}{8} \lambda_2^3)\xi^6 \\
&\qquad  + \tfrac{1}{2} (\lambda_8 - \tfrac{1}{2} \lambda_2 \lambda_6 - \tfrac{1}{2} \lambda_4^2
+ \tfrac{3}{8} \lambda_2^2 \lambda_4 - \tfrac{5}{64} \lambda_2^4)\xi^8 \\
&\qquad
+ \tfrac{1}{2} (\lambda_{10} - \tfrac{1}{2} \lambda_2 \lambda_8 - \tfrac{1}{2} \lambda_4 \lambda_6
+ \tfrac{3}{8} \lambda_2^2 \lambda_6 + \tfrac{3}{8} \lambda_2 \lambda_4^2 \\
&\qquad - \tfrac{5}{16} \lambda_2^3 \lambda_4 + \tfrac{7}{128} \lambda_2^5)\xi^{10}
+ O(\xi^{12}) \big).
\end{split}
\end{gather}
The negative exponents of leading terms show the Sato weights:
$\wgt x = 2$, $\wgt y\,{=}\, 5$. The  weight introduces an order in the list of monomials in $x$ and $y$:
\begin{equation}\label{MlListC25}
\begin{array}{lccccccccc}
\text{weights:} & 0 & 2 & 4 & 5 & 6 & 7 & 8 & 9 &  \\
\mathfrak{M} = \{& 1, &x, &x^2, & y, & x^3, & y x, & x^4 & y x^2, & \dots\}
\end{array}
\end{equation}
The absent weights $\{1,\,3\}$ form the Weierstras  gap sequence of $\mathcal{C}$.

Let holomorphic (or first kind) differentials $\rmd u = (\rmd u_1,\rmd u_3)^t$ be given 
in the standard not normalized  form:
\begin{gather}\label{FKDiffDef}
\rmd u_1 = \frac{x \rmd x}{-2y}, \qquad\qquad \rmd u_3 = \frac{\rmd x}{-2y}. 
\end{gather}
Note, that $\wgt \rmd u_{\mathfrak{w}} = -\mathfrak{w}$, and the weights coincide with the negative
Weierstrasss gap sequence. 
The Abel map is defined with respect to these differentials, with the basepoint located at infinity, namely
\begin{align*}
&\mathcal{A}(P) = \int_{\infty}^P \rmd u, \ \ \quad \qquad  P = (x,y) \in \mathcal{C};\\
&\mathcal{A}(D) = \sum_{k=1}^n \mathcal{A}(P_k), \qquad D = \sum_{k=1}^n P_k
\in \mathcal{C}^n.
\end{align*}

Let $\mathfrak{a}_1$, $\mathfrak{b}_1$, $\mathfrak{a}_2$, $\mathfrak{b}_2$
form a canonical homology basis.
Not normalized first kind period matrices are defined by
\begin{gather}\label{FKPeriods}
\omega = (\omega_{i,j}) = \bigg(\int_{\mathfrak{a}_j} \rmd u_i\bigg), \qquad\qquad
\omega' = (\omega'_{i,j}) = \bigg(\int_{\mathfrak{b}_j} \rmd u_i\bigg),
\end{gather}
and generate the period lattice $\{\omega,\omega' \}$, which introduces a polarization
on the Jacobian variety 
$\Jac(\mathcal{C}) = \Complex^2 / \{\omega,\omega' \}$.
Similarly to \eqref{FKPeriods}, second kind  period matrices are defined:
\begin{gather}\label{etaM}
 \eta = (\eta_{ij})= \bigg( \int_{\mathfrak{a}_j} \rmd r_i\bigg),\qquad\quad
 \eta' = (\eta'_{ij}) = \bigg(\int_{\mathfrak{b}_j} \rmd r_i \bigg),
\end{gather}
from second kind differentials $\rmd r = (\rmd r_1, \rmd r_3)^t$
associated  with the first kind differentials $\rmd u$, see \cite[Art.\,138]{bakerAF}. 
Namely,
\begin{gather*}
\rmd r_1 =   \frac{x^2 \rmd x}{-2y},\qquad\quad
\rmd r_3 = \big(3x^3 + 2 \lambda_2 x^2 + \lambda_4 x \big) \frac{x \rmd x}{-2y}.
\end{gather*}

Let $D_2 = (x_1,y_1) + (x_2,y_2)$ be a  non-special divisor on $\mathcal{C}$.
The Abel image $\mathcal{A}(D_2) \equiv u = (u_1,u_3)^t$ is computed by
\begin{gather}\label{FKIntDef}
u = \int_\infty^{(x_1,y_1)} \rmd u + \int_\infty^{(x_2,y_2)} \rmd u.
\end{gather}

The not normalized coordinates $u = (u_1,u_3)^t$ serve as an argument of $\sigma$-, and $\wp$-functions.
Normalization, which is employed by $\theta$-function, is reached as follows
\begin{equation}
v = \omega^{-1} u,\qquad\qquad \tau = \omega^{-1} \omega',
\end{equation}
where $v$ is a vector of  normalized coordinates of $\Jac(\mathcal{C})$, 
and $\tau$ belongs to the Siegel space of order $2$.

\subsection{Entire functions}
Let the Riemann  $\theta$-function on $\Complex^2 \supset \Jac(\mathcal{C})$ be defined by
\begin{gather}\label{ThetaDef}
 \theta(v;\tau) = \sum_{n\in \Integer^2} \exp \big(\imath \pi n^t \tau n + 2\imath \pi n^t v\big).
\end{gather}
Let a $\theta$-function with characteristic $[\varepsilon]= (\varepsilon', \varepsilon)^t$ be defined by
\begin{equation}\label{ThetaDefChar}
 \theta[\varepsilon](v;\tau) = \exp\big(\imath \pi ( \varepsilon'{}^t \tau \varepsilon'
 + 2 \imath \pi  (v+\varepsilon)^t (\varepsilon')\big)  \theta(v+\varepsilon + \tau \varepsilon';\tau),
\end{equation}
where $[\varepsilon]$ is a $2\times g$ matrix 
composed of two $2$-component vectors $\varepsilon'$ and $\varepsilon$ with 
real entries within the interval $[0,1)$.  

According to \cite[Eq.(2.3)]{belHKF}, $\sigma$-function is related to $\theta$-function
 as follows
\begin{equation}\label{SigmaThetaRel}
\sigma(u) = C \exp\big({-}\tfrac{1}{2} u^t \varkappa u\big) \theta[K](\omega^{-1} u;  \omega^{-1} \omega'),
\end{equation}
where $[K]$ denotes the characteristic of the vector $K$ of Riemann constants,  
a symmetric matrix $\varkappa = \eta \omega^{-1}$ is obtained from the second 
kind period matrix $\eta$ defined by \eqref{etaM}, and the constant $C$ does not depend of $u$.

The origin $u=0$ of $\Jac(\mathcal{C})$ is the Abel image of infinity on $\mathcal{C}$,
which also serves as the neutral point.
Every point $u$ in the fundamental domain of $\Jac(\mathcal{C})$ 
is represented by its characteristic $[\varepsilon]$, namely
\begin{equation}\label{uChar}
u[\varepsilon] =  \omega \varepsilon +   \omega' \varepsilon'.
\end{equation}

\subsection{Uniformization of the curve}
Every class of equivalent divisors on $\mathcal{C}$ has a representative
$P_1 + P_2 - 2 \infty$, and $P_1$, $P_2$ are not in involution, that is $P_2 \neq -P_1$.
Such a representative is called a \emph{reduced divisor}. Since the poles are located at infinity,
which serves as the basepoint, it is convenient to define every reduced divisor by its positive part,
as follows
\begin{itemize}
\item non-special $D_2 = (x_1,y_1)+(x_2,y_2)$ of degree $2$,
\item special $D_1=(x_1,y_1) + \infty$ of degree $1$
\item neutral $O = 2\infty$ of degree $0$, $u(2\infty)=0$.
\end{itemize}
Let $\mathfrak{C}_2$ be a collection of all degree $2$ non-special divisors.
As mentioned in Introduction, we denote by  
$(\sigma)_0 = \{u \in \Jac(\mathcal{C}) \mid \sigma(u)=0\}$  the theta divisor.
Then $\mathcal{A}(\mathfrak{C}_2) = \Jac(\mathcal{C})\backslash (\sigma)_0$.

Uniformization of $\mathcal{C}$ is given in terms of $\wp$-functions,
known as multiply periodic after \cite{bakerMP}, and Kleinian after \cite{belHKF}.
Actually,
\begin{gather*}
\wp_{i,j}(u) = -\frac{\partial^2 \log \sigma(u) }{\partial u_i \partial u_j },\qquad
\wp_{i,j,k}(u) = -\frac{\partial^3 \log \sigma(u) }{\partial u_i \partial u_j \partial u_k},\qquad
\text{etc}.
\end{gather*}
Meromorphic functions $\wp_{i,j}$, $\wp_{i,j,k}$, and all higher derivatives
are $\{\omega,\omega'\}$-periodic.

\begin{prop}[\textbf{The Jacobi inversion problem}]\label{P:JIPr}  \cite[\S\,216]{bakerAF}
Given $u\in \Jac(\mathcal{C})\backslash (\sigma)_0$,
 the Abel pre-image  of $u$
is a non-special divisor  $D_2 = (x_1,y_1)+(x_2,y_2)$
with coordinates uniquely defined by the system
\begin{equation}\label{JacIvnPrD2}
\mathcal{R}_{4}(x;u) = 0,\qquad\qquad \mathcal{R}_{5}(x,y;u) =0,\quad
\end{equation}
where $u = \mathcal{A}(D_2)$, 
\begin{equation}\label{wpJacIvnPr}
\begin{split}
&\mathcal{R}_{4}(x;u) \equiv x^2 - x \wp_{1,1}(u) - \wp_{1,3}(u), \\
&\mathcal{R}_{5}(x,y;u) \equiv y + \tfrac{1}{2} x \wp_{1,1,1}(u) + \tfrac{1}{2} \wp_{1,1,3}(u). 
\end{split}
\end{equation}
In other words, $D_2$ is a common divisor of zeros of the two polynomial 
functions  $\mathcal{R}_{4}$, $\mathcal{R}_{5}$ on the curve $\mathcal{C}$.
\end{prop}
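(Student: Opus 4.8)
The plan is to reduce the proposition to the two scalar identities $\wp_{1,1}(u)=x_1+x_2$ and $\wp_{1,3}(u)=-x_1x_2$, which say exactly that $x_1,x_2$ are the roots of $\mathcal{R}_4$; the relation $\mathcal{R}_5$ will then follow by differentiation. Throughout I treat the coordinates of the Abel pre-image as single-valued meromorphic functions of $u$: because $\mathcal{A}(\mathfrak{C}_2)=\Jac(\mathcal{C})\setminus(\sigma)_0$, the symmetric functions $x_1+x_2$, $x_1x_2$ and the $y_i$ are well defined on $\Jac(\mathcal{C})\setminus(\sigma)_0$, and, being functions on the torus $\Complex^2/\{\omega,\omega'\}$, they are automatically $\{\omega,\omega'\}$-periodic, exactly like the $\wp$-functions.

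First I would record the differential relations. Differentiating \eqref{FKIntDef} with \eqref{FKDiffDef} gives the linear system $\rmd u_1=x_1\rmd x_1/(-2y_1)+x_2\rmd x_2/(-2y_2)$ and $\rmd u_3=\rmd x_1/(-2y_1)+\rmd x_2/(-2y_2)$. Since $D_2$ is non-special we may assume $x_1\neq x_2$ and $y_i\neq0$, so this system is invertible and yields
\[
\partial_{u_1}x_1=\frac{-2y_1}{x_1-x_2},\qquad \partial_{u_3}x_1=\frac{2y_1x_2}{x_1-x_2},
\]
together with the analogous formulas for $x_2$ obtained by interchanging the indices $1\leftrightarrow2$.

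The crux is the identification $\wp_{1,1}(u)=x_1+x_2$ and $\wp_{1,3}(u)=-x_1x_2$. Both sides are periodic meromorphic functions on $\Jac(\mathcal{C})$ whose poles lie only on the theta divisor $(\sigma)_0$, since an $x_i$ becomes infinite precisely when a point of $D_2$ escapes to infinity, i.e. when $D_2$ degenerates to a special divisor. I would match principal parts along $(\sigma)_0$: parametrising the escaping point by the local coordinate $\xi$ of \eqref{param} gives $x_i=\xi^{-2}$ exactly and $u_1=u_1^0+\xi+O(\xi^3)$ in the transverse direction, so $x_1+x_2$ carries a pure double pole $\xi^{-2}$; on the other hand $\sigma$ vanishes to first order along $(\sigma)_0$, so $\wp_{1,1}=-\partial_{u_1}^2\log\sigma$ produces the same $\xi^{-2}$ leading term. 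Hence $x_1+x_2-\wp_{1,1}(u)$ extends holomorphically across $(\sigma)_0$, is therefore holomorphic on the whole compact torus, and so is independent of $u$; it is then a weight-$2$ function of the parameters alone, hence a multiple of $\lambda_2$. The same reasoning at weight $4$ gives $x_1x_2+\wp_{1,3}(u)=\mathrm{const}$. Granting that these residual constants vanish (see below), $x_1$ and $x_2$ are exactly the roots of $\mathcal{R}_4(x;u)$.

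The relation $\mathcal{R}_5$ then follows by differentiation, with no further global analysis. From $\partial_{u_1}\wp_{1,1}=\wp_{1,1,1}$, $\partial_{u_3}\wp_{1,1}=\wp_{1,1,3}$ and $\wp_{1,1}=x_1+x_2$, the formulas of the second paragraph give $\wp_{1,1,1}=-2(y_1-y_2)/(x_1-x_2)$ and $\wp_{1,1,3}=2(y_1x_2-y_2x_1)/(x_1-x_2)$; substituting these into $y_i+\tfrac12x_i\wp_{1,1,1}+\tfrac12\wp_{1,1,3}$ and simplifying collapses the expression to $0$ for both $i=1,2$, which is precisely $\mathcal{R}_5(x_i,y_i;u)=0$. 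Uniqueness is then immediate: $\mathcal{R}_4$ fixes the unordered pair $\{x_1,x_2\}$, $\mathcal{R}_5$ recovers each $y_i$ from the matching $x_i$, and non-speciality ($P_2\neq-P_1$) guarantees that these two points constitute the reduced divisor $D_2$. I expect the main obstacle to be exactly the vanishing of the weight-$2$ and weight-$4$ constants of the third paragraph: since \eqref{SigmaThetaRel} gives $\wp_{1,1}=\varkappa_{1,1}-\partial_{u_1}^2\log\theta[K](\omega^{-1}u;\omega^{-1}\omega')$, the constant is sensitive to the precise normalization of $\sigma$ through $\varkappa=\eta\omega^{-1}$, so ruling out a spurious $\lambda_2$-term requires carrying the expansion along $(\sigma)_0$ one order beyond the leading pole and invoking that normalization --- this is the local content of Baker's treatment \cite[\S\,216]{bakerAF}.
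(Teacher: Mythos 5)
The paper itself offers no proof of Proposition~\ref{P:JIPr}: it is quoted as a classical result from \cite[\S\,216]{bakerAF}, and the only nearby argument (the proof of Proposition~\ref{P:JIPRels}) takes Proposition~\ref{P:JIPr} as given. So your attempt has to stand on its own. Its skeleton is reasonable, and every piece that can be checked against the paper does check out: your inversion of the differential system reproduces exactly the Jacobian entries \eqref{dxduJ}; your formulas $\wp_{1,1,1}(u)=-2(y_1-y_2)/(x_1-x_2)$ and $\wp_{1,1,3}(u)=2(x_2y_1-x_1y_2)/(x_1-x_2)$ agree with \eqref{AlphaBeta} via $\beta_3=\tfrac12\wp_{1,1,1}$, $\beta_5=\tfrac12\wp_{1,1,3}$; and the algebraic collapse of $\mathcal{R}_5(x_i,y_i;u)$ to zero, granted $\wp_{1,1}=x_1+x_2$, is correct and is essentially the computation behind Proposition~\ref{P:JIPRels}.

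Nevertheless there is a genuine gap, and it sits exactly at the core of the statement: the identities $\wp_{1,1}(u)=x_1+x_2$ and $\wp_{1,3}(u)=-x_1x_2$, which \emph{are} the content of \eqref{wpJacIvnPr}, are never established. Two steps are missing. First, matching only the leading $\xi^{-2}$ coefficient of the poles along $(\sigma)_0$ does not show that $x_1+x_2-\wp_{1,1}(u)$ extends holomorphically across $(\sigma)_0$: after the double poles cancel, a residual simple pole (with coefficient varying along $(\sigma)_0$) may survive. You would need either to match the $\xi^{-1}$ terms as well, or to invoke the fact that on the principally polarized abelian surface $\Jac(\mathcal{C})$ any meromorphic function whose polar divisor is at most $(\sigma)_0$ is constant (the space of such functions is one-dimensional); either remark closes this hole, but neither appears in your argument. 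Second, and more seriously, you explicitly concede that the residual constants --- a multiple of $\lambda_2$ at weight $2$, a combination of $\lambda_4$ and $\lambda_2^2$ at weight $4$ --- are ``granted'' to vanish, deferring their vanishing to the normalization \eqref{SigmaThetaRel} and to ``the local content of Baker's treatment''. But this is not a removable technicality: with a nonzero constant $c$, the points $x_1,x_2$ would be roots of $x^2-x\big(\wp_{1,1}(u)+c\lambda_2\big)-\big(\wp_{1,3}(u)+c_1\lambda_4+c_2\lambda_2^2\big)$ rather than of $\mathcal{R}_4$, i.e.\ the proposition as stated would be false. Deferring precisely this computation to \cite[\S\,216]{bakerAF} means the attempt reduces the theorem to itself rather than proving it. To finish on your own terms you could, for instance, parametrize a generic point of $(\sigma)_0$ by a special divisor $(x_1,y_1)+\infty$, expand $\sigma$ and the coordinate functions in the local parameter $\xi$ of \eqref{param} one order beyond the leading pole, and use weight homogeneity in $(u,\lambda)$ to conclude that the universal constants vanish identically once they vanish in that expansion.
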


\begin{prop}\label{P:JIPRels}
$\mathcal{R}_{4}$, $\mathcal{R}_{5}$ defined by \eqref{wpJacIvnPr} 
on the common divisor of zeros are 
connected by the relation
\begin{equation}\label{R4R5Rel}
\mathcal{R}_{5}(x_i,y_i;u) = - \partial_{u_1} \mathcal{R}_{4}(x_i;u) , \quad i=1,\,2.
\end{equation}
\end{prop}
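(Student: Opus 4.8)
The plan is to obtain \eqref{R4R5Rel} by differentiating $\mathcal{R}_{4}$ directly and then evaluating on the support of $D_2$ by means of Proposition~\ref{P:JIPr}. Since the $\wp$-functions are logarithmic derivatives of $\sigma$, one has $\partial_{u_1}\wp_{1,1}=\wp_{1,1,1}$ and $\partial_{u_1}\wp_{1,3}=\wp_{1,1,3}$; treating $x$ as independent of $u$ this gives at once
\[
\partial_{u_1}\mathcal{R}_{4}(x;u)= -x\,\wp_{1,1,1}(u)-\wp_{1,1,3}(u),
\]
so that $-\partial_{u_1}\mathcal{R}_{4}(x;u)=x\,\wp_{1,1,1}(u)+\wp_{1,1,3}(u)$ is, up to the factor $\tfrac12$ and the summand $y$, exactly the $\wp$-part of $\mathcal{R}_{5}$. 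This already shows that $\mathcal{R}_{5}$ and $\partial_{u_1}\mathcal{R}_{4}$ are assembled from the same pair $\wp_{1,1,1},\wp_{1,1,3}$ and carry the same Sato weight $5$, which is the structural reason a relation of the form \eqref{R4R5Rel} can hold.

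Next I would pin the coefficients down on the divisor. By Proposition~\ref{P:JIPr} the roots $x_1(u),x_2(u)$ of $\mathcal{R}_{4}(\,\cdot\,;u)$ are the $x$-coordinates of $D_2$, hence $\mathcal{R}_{4}(x_i(u);u)\equiv 0$ as a function of $u$. Differentiating this identity in $u_1$ by the chain rule and isolating the partial derivative computed above yields
\[
\partial_{u_1}\mathcal{R}_{4}(x_i;u)= -\,\mathcal{R}_{4}'(x_i)\,\frac{\partial x_i}{\partial u_1},\qquad \mathcal{R}_{4}'(x_i)=2x_i-\wp_{1,1}(u)=\prod_{j\neq i}(x_i-x_j).
\]
The factor $\partial x_i/\partial u_1$ is supplied by the Abel map: inverting the Jacobian of $(\rmd u_1,\rmd u_3)=\big(x\,\rmd x/(-2y),\,\rmd x/(-2y)\big)$ with respect to $(x_1,x_2)$ expresses $\partial x_i/\partial u_1$ through $y_i$ and $\mathcal{R}_{4}'(x_i)$. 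Substituting this into the chain-rule identity cancels $\mathcal{R}_{4}'(x_i)$ and leaves $\partial_{u_1}\mathcal{R}_{4}(x_i;u)$ expressed purely through $y_i$, which is precisely the content carried by $\mathcal{R}_{5}(x_i,y_i;u)$ via Proposition~\ref{P:JIPr}.

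Comparing these two evaluations then gives \eqref{R4R5Rel}. The main obstacle I anticipate is not conceptual but is the sign and normalisation bookkeeping: one must track the factor $-2y$ in the first-kind differentials, the sign produced by inverting the $2\times2$ differential Jacobian, and the factor $\tfrac12$ in the definition of $\mathcal{R}_{5}$, and then check that the \emph{direct} evaluation of $\partial_{u_1}\mathcal{R}_{4}(x_i;u)$ through $\wp_{1,1,1},\wp_{1,1,3}$ agrees with the \emph{structural} one coming from the Abel map. Once these constants are reconciled the identity follows for both $i=1,2$, the two cases entering symmetrically through the two points of $D_2$.
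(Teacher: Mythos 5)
Your proposal follows essentially the same route as the paper's proof: differentiate the identity $\mathcal{R}_{4}(x_i(u);u)=0$ along $u_1$ via the chain rule, and feed in the entries $\partial x_i/\partial u_1$ of the Jacobian of the inverse Abel map, which the paper quotes as \eqref{dxduJ} and which you propose to obtain by inverting the $2\times 2$ matrix of first-kind differentials. Your additional opening step --- computing $\partial_{u_1}\mathcal{R}_{4}(x;u)=-x\,\wp_{1,1,1}(u)-\wp_{1,1,3}(u)$ and matching it against the $\wp$-part of $\mathcal{R}_{5}$ --- merely makes explicit the comparison the paper compresses into ``we immediately obtain'', so the two arguments coincide in substance.
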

\begin{proof}
If $D_2 =  (x_1,y_1)+(x_2,y_2)$ is a common divisor of zeros of $\mathcal{R}_{4}$, $\mathcal{R}_{5}$, then
$\mathcal{R}_{4}(x_i;u) = 0$, $i=1$, $2$. Differentiating with respect to $u_1$, and taking into account
that the Jacobian matrix of $\mathcal{A}^{-1}: \mathfrak{C}_2 \to \Jac(\mathcal{C})\backslash (\sigma)_0$ 
has the entries
\begin{align}\label{dxduJ}
&\frac{\partial x_1}{\partial u_1} =  \frac{-2y_1}{x_1-x_2},&
&\frac{\partial x_2}{\partial u_1} =  \frac{2y_2}{x_1-x_2},& 
&\frac{\partial x_1}{\partial u_3} =  \frac{2 x_2 y_1}{x_1-x_2},&
&\frac{\partial x_2}{\partial u_3} =  \frac{-2 x_1 y_2}{x_1-x_2},&
\end{align}
we immediately obtain \eqref{R4R5Rel}.
\end{proof}

\begin{rem}\label{R:JIPr}
In general, $\mathcal{R}_{4}$, $\mathcal{R}_{5}$ from Proposition~\ref{P:JIPr} have the form
\begin{gather}\label{D2PolyDef}
\mathcal{R}_{4}(x) = x^2 + \alpha_2 x + \alpha_4, \qquad
\mathcal{R}_{5}(x,y) = y  + \beta_3 x + \beta_5.
\end{gather}
Let $D_2 = (x_1,y_1) + (x_2,y_2)$ be the common divisor of zeros of $\mathcal{R}_{4}$ and
$\mathcal{R}_5$, then
\begin{subequations}\label{AlphaBeta}
\begin{align}
&\alpha_2  = - (x_1 + x_2) = - \wp_{1,1}(u),&
&\alpha_4  = x_1 x_2 = - \wp_{1,3}(u),& \\
&\beta_3 = - \frac{y_1-y_2}{x_1-x_2} = \tfrac{1}{2}\wp_{1,1,1}(u), &
&\beta_5 = \frac{x_2 y_1 - x_1y_2}{x_1-x_2} = \tfrac{1}{2}\wp_{1,1,3}(u). &
\end{align}
\end{subequations}
Actually, $\alpha_2$, $\alpha_4$, $-\beta_3$, $-\beta_5$ are known 
as the Mumford coordinates,
and the pair of polynomials in $x$:  $\mathcal{R}_{4}(x)$, $y - \mathcal{R}_{5}(x,y)$,
are the first two from the triple of Mumford's representation of $D_2$. 
In what follows, we call $\alpha_2$, $\alpha_4$, $\beta_3$,
$\beta_5$ the \emph{Mumford coordinates}, for simplicify. Instead of Mumford's representation,
we use  polynomial functions $\mathcal{R}_{4}$, $\mathcal{R}_5$,
which are sufficient to define any  divisor from $\mathfrak{C}_2$ uniquely.
\end{rem}

From \cite[Theorem 3.2]{belHKF} we have the fundamental cubic relations on $\mathcal{C}$.
After eliminating $\wp_{3,3}(u)$ an algebraic model of $\Jac(\mathcal{C})\backslash (\sigma)_0$ is obtained.
\begin{prop}
Given $u\in \Jac(\mathcal{C})\backslash (\sigma)_0$, 
the following identities define $\Jac(\mathcal{C}) \backslash (\sigma)_0$:
\begin{gather}\label{wpJacEqs}
\begin{split}
&\tfrac{1}{2} \wp_{1,1,1}(u) \wp_{1,1,3}(u) + \tfrac{1}{4}\wp_{1,1}(u)\wp_{1,1,1}^2(u) = 
  2 \wp_{1,1}^2(u) \wp_{1,3}(u) \\
&\qquad + \wp_{1,3}^2(u) + 2 \lambda_2 \wp_{1,1}(u) \wp_{1,3}(u)
+  \lambda_4 \wp_{1,3}(u) + \lambda_8 \\
&\qquad + \wp_{1,1}(u) \big(\wp_{1,1}(u) ^3 + \wp_{1,1}(u) \wp_{1,3}(u) 
+ \lambda_2 \wp_{1,1}(u)^2 + \lambda_4 \wp_{1,1}(u) + \lambda_6\big),  \\
&\tfrac{1}{4} \wp_{1,1,3}(u)^2 + \tfrac{1}{4} \wp_{1,3}(u)\wp_{1,1,1}^2(u) =
 \wp_{1,1}(u) \wp_{1,3}(u)^2 + \lambda_2 \wp_{1,3}(u)^2 + \lambda_{10} \\
&\qquad +  \wp_{1,3}(u) \big(\wp_{1,1}(u) ^3 + \wp_{1,1}(u) \wp_{1,3}(u) 
+ \lambda_2 \wp_{1,1}(u)^2 + \lambda_4 \wp_{1,1}(u) + \lambda_6\big).
\end{split}
\end{gather}
\end{prop}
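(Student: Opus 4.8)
The plan is to prove the identities by working on the Abel pre-image supplied by Proposition~\ref{P:JIPr}, and then to read off, from the very same computation, that they cut out exactly $\Jac(\mathcal{C})\backslash(\sigma)_0$. Fix $u\in\Jac(\mathcal{C})\backslash(\sigma)_0$ and let $D_2=(x_1,y_1)+(x_2,y_2)$ be its pre-image. By Proposition~\ref{P:JIPr} and Remark~\ref{R:JIPr} (suppressing the argument $u$ below), the $x_i$ are the roots of $\mathcal{R}_{4}(x)=x^2-\wp_{1,1}x-\wp_{1,3}$, so that $x_i^2=\wp_{1,1}x_i+\wp_{1,3}$ for $i=1,2$, while $\mathcal{R}_{5}(x_i,y_i)=0$ gives $y_i=-\tfrac12\bigl(\wp_{1,1,1}x_i+\wp_{1,1,3}\bigr)$.

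The key observation is that the two equations \eqref{wpJacEqs} are precisely the statement ``$y_i^2=\mathcal{P}(x_i)$ after reduction modulo $\mathcal{R}_{4}$''. Indeed, squaring the expression for $y_i$ and replacing $x_i^2$ by $\wp_{1,1}x_i+\wp_{1,3}$ produces a \emph{linear} polynomial in $x_i$,
\[
y_i^2=\bigl(\tfrac12\wp_{1,1,1}\wp_{1,1,3}+\tfrac14\wp_{1,1}\wp_{1,1,1}^2\bigr)x_i+\tfrac14\bigl(\wp_{1,1,3}^2+\wp_{1,3}\wp_{1,1,1}^2\bigr)\equiv\ell_1\,x_i+\ell_0,
\]
whose coefficients $\ell_1,\ell_0$ are exactly the left-hand sides of \eqref{wpJacEqs}. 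On the other hand, since $(x_i,y_i)\in\mathcal{C}$ we have $y_i^2=\mathcal{P}(x_i)$, and because $\mathcal{R}_{4}(x_i)=0$ we may replace $\mathcal{P}(x_i)$ by the remainder $\mathcal{P}(x)\bmod\mathcal{R}_{4}(x)=\rho_1 x+\rho_0$, a linear polynomial whose coefficients are polynomials in $\wp_{1,1},\wp_{1,3}$ and the $\lambda$'s. A short division, using the complete homogeneous symmetric polynomials $h_k(x_1,x_2)$ for the divided differences of $\mathcal{P}$, identifies $\rho_1,\rho_0$ with the right-hand sides of \eqref{wpJacEqs}.

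Equating the two expressions for $y_i^2$ gives $(\ell_1-\rho_1)x_i+(\ell_0-\rho_0)=0$ for $i=1,2$. For generic $u$ the abscissae are distinct, and a linear polynomial vanishing at two distinct points $x_1\neq x_2$ is identically zero, forcing $\ell_1=\rho_1$ and $\ell_0=\rho_0$, which are the identities \eqref{wpJacEqs}. The coincident case $x_1=x_2$ (a non-special divisor $2(x_1,y_1)$ with $y_1\neq0$) follows by continuity, or directly: there $y^2=\mathcal{P}(x)$ holds to second order at $x_1$, so the linear polynomial $(\ell_1-\rho_1)x+(\ell_0-\rho_0)$ has a double root at $x_1$ and hence again vanishes. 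For the converse ``define'' claim, given a quadruple $(\wp_{1,1},\wp_{1,3},\wp_{1,1,1},\wp_{1,1,3})$ satisfying \eqref{wpJacEqs} one reconstructs $x_1,x_2$ as the roots of $\mathcal{R}_{4}$ and $y_i$ from $\mathcal{R}_{5}$; the two equations guarantee $y_i^2=\mathcal{P}(x_i)$, so $(x_i,y_i)\in\mathcal{C}$ and the resulting $D_2\in\mathfrak{C}_2$ has Abel image the prescribed $u$. Thus the zero locus of \eqref{wpJacEqs} is in bijection with $\mathfrak{C}_2=\mathcal{A}^{-1}\bigl(\Jac(\mathcal{C})\backslash(\sigma)_0\bigr)$.

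The bookkeeping in the division $\mathcal{P}(x)\bmod\mathcal{R}_{4}(x)$ is the only laborious step, and matching its two coefficients against the stated right-hand sides term-by-term in $\lambda_2,\dots,\lambda_{10}$ is where an error is most likely to creep in; conceptually, however, there is no obstacle. An alternative, matching the remark preceding the statement, is to begin from the three fundamental cubic relations of \cite{belHKF} (which also involve $\wp_{3,3}$) and eliminate $\wp_{3,3}$; that route is equivalent but requires importing the external relations, whereas the computation above is self-contained within the present setup.
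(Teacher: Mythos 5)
Your proof is correct, and it takes a genuinely different route from the paper. The paper offers no self-contained argument at all: it simply invokes the fundamental cubic relations of \cite[Theorem 3.2]{belHKF} and states that \eqref{wpJacEqs} results from eliminating $\wp_{3,3}$ between them. You instead derive the identities entirely inside the paper's own setup, from Proposition~\ref{P:JIPr}: the two equations \eqref{wpJacEqs} are exactly the coefficients of the statement ``$y_i^2=\mathcal{P}(x_i)$ reduced modulo $\mathcal{R}_4$'', with $y_i$ supplied by $\mathcal{R}_5$. I checked the deferred bookkeeping, and it does close: with $p=\wp_{1,1}$, $q=\wp_{1,3}$ one gets $x^5\equiv(p^4+3p^2q+q^2)x+p^3q+2pq^2$, etc., and the resulting linear remainder of $\mathcal{P}(x)$ matches the right-hand sides of \eqref{wpJacEqs} term by term, while squaring $y_i=-\tfrac12(\wp_{1,1,1}x_i+\wp_{1,1,3})$ and reducing gives the left-hand sides. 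Your approach buys two things the paper's citation does not: a conceptual reading of \eqref{wpJacEqs} as the compatibility of the interpolation data $(\mathcal{R}_4,\mathcal{R}_5)$ with the curve equation \eqref{C25Eq}, and an honest proof of the ``define'' half of the claim (surjectivity of $u\mapsto(\wp_{1,1},\wp_{1,3},\wp_{1,1,1},\wp_{1,1,3})$ onto the zero locus), which the paper leaves implicit in the cited literature; the paper's route, in exchange, is instant and stays aligned with the standard $\wp$-function identities (including the expression for $\wp_{3,3}$ it records immediately afterwards). Two small points to tighten: in the coincident-root case the continuity argument is the clean one (the locus $\wp_{1,1}^2+4\wp_{1,3}=0$ is a proper analytic subset), and in the converse you should note that the reconstructed divisor cannot be special --- that could only happen if $\mathcal{R}_4$ had a double root $x_1$ with $\beta_3x_1+\beta_5=0$, which together with \eqref{wpJacEqs} would force $\mathcal{P}(x_1)=\mathcal{P}'(x_1)=0$, contradicting the nondegeneracy assumption $\lambda\notin\mathrm{Dscr}$.
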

This model is used in \cite[Theorem 2.8]{Uchida}.
The four functions  $\wp_{1,1}$, $\wp_{1,3}$, $\wp_{1,1,1}$, $\wp_{1,1,3}$, 
which arise from \eqref{wpJacIvnPr}, serve as  coordinates on 
$\Jac(\mathcal{C})\backslash (\sigma)_0$.
The differential field of meromorphic functions on $\Jac(\mathcal{C})\backslash (\sigma)_0$ is
$\Complex [\wp_{1,1},\,\wp_{1,3},\,\wp_{1,1,1}$, $\wp_{1,1,3}]$, that is, consists of
 polynomial expressions in these four functions, see \cite{bel08}.
For example, 
\begin{subequations}
\begin{align}
&\wp_{1,3,3}(u) = \wp_{1,3}(u) \wp_{1,1,1}(u) - \wp_{1,1}(u) \wp_{1,1,3}(u), \label{wp133}\\
&\wp_{1,1,1,1}(u) = 6 \wp_{1,1}(u)^2 + 4 \wp_{1,3}(u) + 4 \lambda_2  \wp_{1,1}(u) + 2\lambda_4,\\
&\wp_{1,1,1,3}(u) = 6 \wp_{1,3}(u) \wp_{1,1}(u) - 2 \wp_{3,3}(u) + 4 \lambda_2 \wp_{1,3}(u),\\
&\wp_{3,3}(u) = \tfrac{1}{4} \wp_{1,1,1}(u)^2 - \wp_{1,1}(u)^3 - \wp_{1,3}(u) \wp_{1,1}(u) \\
&\phantom{\wp_{3,3}(u) =} - \lambda_2 \wp_{1,1}(u)^2  - \lambda_4 \wp_{1,1}(u) - \lambda_6. \notag
\end{align}
\end{subequations}

According to Proposition\;\ref{P:JIPr}, the map 
$$u \mapsto \big(\wp_{1,1}(u),\,\wp_{1,3}(u),\,\wp_{1,1,1}(u),\,\wp_{1,1,3}(u)\big)
= \big({-}\alpha_2,\, {-}\alpha_4,\, 2 \beta_3,\, 2\beta_5 \big) $$
takes $u\in \Jac(\mathcal{C})\backslash (\sigma)_0$ to $\mathfrak{C}_2$.
In terms of the Mumford coordinates, \eqref{wpJacEqs} acquire the form
\begin{gather}\label{ABJacEqs}
\begin{split}
&J_8(\alpha_2,\alpha_4,\beta_3,\beta_5;\lambda) 
\equiv 2\beta_3 \beta_5 - \alpha_2^2 \alpha_4  - \alpha_4^2  + \lambda_4 \alpha_4 -\lambda_8 \\
&\qquad - \alpha_2 \big(\beta_3^2 + \alpha_2^3 - 4 \alpha_2 \alpha_4 + \lambda_2 (2 \alpha_4 - \alpha_2^2) 
+ \lambda_4 \alpha_2 - \lambda_6 \big) = 0,  \\
&J_{10}(\alpha_2,\alpha_4,\beta_3,\beta_5;\lambda) 
\equiv \beta_5^2 - 2 \alpha_2 \alpha_4^2 + \lambda_2 \alpha_4^2 - \lambda_{10} \\
&\qquad - \alpha_4 \big(\beta_3^2 + \alpha_2^3 - 4 \alpha_2 \alpha_4 + \lambda_2 (2 \alpha_4 - \alpha_2^2) 
+ \lambda_4 \alpha_2 - \lambda_6 \big) = 0.
\end{split}
\end{gather}

\section{Polynomial functions on a curve}
Since the base point is fixed at infinity, divisors are described by means of polynomial functions
on $\mathcal{C}$, which form a ring $\mathfrak{P}(\mathcal{C}) = \Complex[x,y] / f(x,y;\lambda)$. 
Recall, that each  divisor is represented by its positive part.

Let $\mathcal{R}_{\mathfrak{w}}$ be a polynomial function 
of weight $\mathfrak{w}$ from $\mathfrak{P}(\mathcal{C})$. The divisor of zeros $(\mathcal{R}_{\mathfrak{w}})_0$ 
is of degree $\mathfrak{w}$, and defined by the system
\begin{equation}\label{ZeroDivRn}
\mathcal{R}_{\mathfrak{w}}(x,y) =0 ,\qquad\qquad f(x,y; \lambda)=0.
\end{equation}
Polynomial functions are constructed from monomials $x^i y^j$, arranged ascendingly by the Sato weight
into an ordered list $\mathfrak{M}$, see for example \eqref{MlListC25} in the case of
a genus two curve.

\begin{prop}
A polynomial function $\mathcal{R}_{\mathfrak{w}}$ of weight $\mathfrak{w}$
is constructed from monomials $\{\mathfrak{m}_{\widetilde{\mathfrak{w}}} \in \mathfrak{M} \mid 
\widetilde{\mathfrak{w}} \leqslant \mathfrak{w} \}$, namely
$$  \mathcal{R}_{\mathfrak{w}}(x,y) = \sum_{\widetilde{\mathfrak{w}} \leqslant \mathfrak{w}}
c_{\widetilde{\mathfrak{w}}}
 \mathfrak{m}_{\widetilde{\mathfrak{w}}}.$$
 \end{prop}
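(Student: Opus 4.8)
The plan is to identify the weight of a polynomial function---which by the preceding paragraph is the degree $\mathfrak{w}$ of its divisor of zeros, hence the order of its pole at the single point at infinity---with the largest Sato weight occurring among its constituent monomials. The content of the proposition lies in the fact that no cancellation of the leading singular behaviour can occur, which in turn rests on the monomials of $\mathfrak{M}$ having pairwise distinct weights.

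First I would put each element of $\mathfrak{P}(\mathcal{C}) = \Complex[x,y]/f$ in canonical form. The relation $f = -y^2 + \mathcal{P}(x) = 0$ gives $y^2 = \mathcal{P}(x)$ in the quotient, so every class reduces uniquely to $A(x) + y B(x)$ with $A, B \in \Complex[x]$; equivalently $\mathfrak{P}(\mathcal{C})$ is free of rank two over $\Complex[x]$ with basis $\{1, y\}$, and the list $\mathfrak{M} = \{x^i\}_{i\geqslant 0} \cup \{y x^i\}_{i\geqslant 0}$ is a $\Complex$-basis. Thus $\mathcal{R}_{\mathfrak{w}}$ is uniquely a finite sum $\sum c_{\widetilde{\mathfrak{w}}} \mathfrak{m}_{\widetilde{\mathfrak{w}}}$, and only the set of weights that may appear is in question.

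Next I would read off the weight of each basis monomial from the expansion \eqref{param} about infinity. Since $x = \xi^{-2}$ and $y = \xi^{-5}(1 + O(\xi^2))$, one has $\wgt(x^i) = 2i$ and $\wgt(y x^i) = 2i + 5$, so the $x^i$ carry the even pole orders $\{0,2,4,\dots\}$ and the $y x^i$ the odd pole orders $\{5,7,9,\dots\}$. The decisive point is that these weights are pairwise distinct. Expanding $\mathcal{R}_{\mathfrak{w}} = \sum c_{\widetilde{\mathfrak{w}}} \mathfrak{m}_{\widetilde{\mathfrak{w}}}$ as a Laurent series in $\xi$, each monomial with nonzero coefficient contributes a most-singular term of its own distinct order, so the leading term of the sum cannot cancel, and its order equals $\max\{\widetilde{\mathfrak{w}} : c_{\widetilde{\mathfrak{w}}} \neq 0\}$.

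Because the only pole of $\mathcal{R}_{\mathfrak{w}}$ sits at infinity, this maximal order is precisely the pole order, which equals the degree $\mathfrak{w}$ of the divisor of zeros, i.e.\ the weight. Hence $\max\{\widetilde{\mathfrak{w}} : c_{\widetilde{\mathfrak{w}}} \neq 0\} = \mathfrak{w}$, every monomial present satisfies $\widetilde{\mathfrak{w}} \leqslant \mathfrak{w}$, and the asserted representation follows, with the top monomial $\mathfrak{m}_{\mathfrak{w}}$ carrying a nonzero coefficient. The step that genuinely uses the geometry of the curve---and the only place where I expect any difficulty---is the no-cancellation argument: it depends entirely on the coprimality $\gcd(2,5)=1$ of the pole orders of $x$ and $y$, the defining $(2,5)$-structure of $\mathcal{C}$, which is exactly what forces all monomial weights to be distinct. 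Were two monomials to share a weight, their leading terms could conspire to cancel and the weight of $\mathcal{R}_{\mathfrak{w}}$ could drop below the largest nominal monomial weight.
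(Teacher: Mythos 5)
Your proof is correct. Note that the paper states this proposition with no proof at all, treating it as immediate from the definition of the Sato weight and the ordered monomial list $\mathfrak{M}$, so there is no authorial argument to compare against; what you wrote is precisely the justification left implicit. Your chain of reasoning is sound at every step: the classes of $x^i$ and $y\,x^i$ form a $\Complex$-basis of $\mathfrak{P}(\mathcal{C})$ because $y^2$ reduces to $\mathcal{P}(x)$ in the quotient; their pole orders at infinity, $2i$ and $2i+5$, are pairwise distinct (even versus odd), so the leading Laurent terms in $\xi$ of distinct basis monomials can never cancel; hence the pole order of any linear combination equals the largest weight carried by a monomial with nonzero coefficient, and since a polynomial function is regular on the affine part, that pole order equals the degree $\mathfrak{w}$ of the divisor of zeros, forcing every monomial present to have weight at most $\mathfrak{w}$ (with the top coefficient nonzero). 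Your closing observation also correctly isolates where the geometry enters: the distinctness of the weights of the basis monomials is exactly the $(n,s)$-curve condition $\gcd(n,s)=1$ specialized to $(2,5)$, and this is the entire content of the no-cancellation step on which the proposition rests.
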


\begin{prop}\label{P:PolyFunctDef}
A polynomial function $\mathcal{R}_{\mathfrak{w}}$ of weight $\mathfrak{w} \geqslant 2g$ 
on  a genus $g$ curve~$\mathcal{C}$
is uniquely defined by a positive divisor $D_{\mathfrak{w}-g}$ of degree $\mathfrak{w}-g$ 
such that  $D_{\mathfrak{w}-g} \subset (\mathcal{R}_{\mathfrak{w}})_0$, 
and $D_{\mathfrak{w}-g}$ contains no  groups of points in involution
(repeated points are allowed).
The function $\mathcal{R}_{\mathfrak{w}}$ is constructed as a linear combination of 
the first $\mathfrak{w}-g+1$ monomials from the ordered list $\mathfrak{M}$.
\end{prop}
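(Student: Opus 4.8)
The plan is to interpret everything through Riemann--Roch. A polynomial function of weight $\mathfrak{w}$ is exactly an element of the space $L(\mathfrak{w}\infty)$ of functions on $\mathcal{C}$ regular off $\infty$ with a pole of order at most $\mathfrak{w}$ there, since on the $(2,2g{+}1)$-curve the Sato weight of a function equals its pole order at infinity (so $(\mathcal{R}_{\mathfrak{w}})_0$ has degree $\mathfrak{w}$). First I would fix the count of monomials. The weights carried by the monomials $x^iy^j$, $j\in\{0,1\}$, are all non-negative integers except the Weierstrass gaps $1,3,\dots,2g-1$; hence for $\mathfrak{w}\geqslant 2g$ exactly $g$ gaps lie below $\mathfrak{w}$ and there are precisely $\mathfrak{w}-g+1$ monomials of weight $\leqslant\mathfrak{w}$. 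Having pairwise distinct weights they are linearly independent in $L(\mathfrak{w}\infty)$, and since $\dim L(\mathfrak{w}\infty)=\mathfrak{w}-g+1$ by Riemann--Roch (as $\mathfrak{w}>2g-2$) they form a basis; this justifies the ansatz $\mathcal{R}_{\mathfrak{w}}=\sum_{k=0}^{\mathfrak{w}-g}c_k\mathfrak{m}_k$ over the first $\mathfrak{w}-g+1$ monomials.

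Next I would write the interpolation system. Requiring $D_{\mathfrak{w}-g}\subset(\mathcal{R}_{\mathfrak{w}})_0$ is $\mathfrak{w}-g$ linear conditions on the $\mathfrak{w}-g+1$ coefficients $c_k$ (at a repeated point of $D_{\mathfrak{w}-g}$ the vanishing is read as a jet condition in the local parameter). The solution space therefore has dimension at least one, giving existence. Uniqueness up to a scalar is equivalent to the system having full rank $\mathfrak{w}-g$, i.e. to $D_{\mathfrak{w}-g}$ imposing independent conditions on $L(\mathfrak{w}\infty)$, equivalently $\ell(\mathfrak{w}\infty-D_{\mathfrak{w}-g})=1$ (where $\ell(\cdot)=\dim L(\cdot)$). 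Since $\rmd x/y$ has divisor $(2g-2)\infty$, the canonical class is $K=(2g-2)\infty$, and Riemann--Roch rewrites this as the non-speciality statement $\ell\big(D_{\mathfrak{w}-g}-(\mathfrak{w}-2g+2)\infty\big)=0$. Once uniqueness holds, the leading coefficient $c_{\mathfrak{w}-g}$ of the spanning solution is forced to be non-zero --- the analogous identity with $(\mathfrak{w}-2g+1)$ in place of $(\mathfrak{w}-2g+2)$ rules out a solution of smaller weight --- so $\mathcal{R}_{\mathfrak{w}}$ indeed has weight exactly $\mathfrak{w}$.

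The heart of the matter is the non-speciality, and this is precisely where the hypothesis that $D_{\mathfrak{w}-g}$ contains no pair of points in involution enters. On a hyperelliptic curve every effective canonical divisor is a sum of $g-1$ fibres $Q+\iota Q$ of the $x$-map, and more generally the systems $|k\infty|$ are essentially pulled back from $\mathbb{P}^1$; feeding a hypothetical function in $L(\mathfrak{w}\infty-D_{\mathfrak{w}-g})$ of too-low weight against its conjugate under $\iota$ produces a divisor forced to contain a fibre, contradicting the absence of involution pairs. The hypothesis is also visibly sharp: if $P+\iota P\subset D_{\mathfrak{w}-g}$ then the weight-$2$ function $x-x_P$ already vanishes at both points, the two corresponding conditions collapse into one, and the solution is no longer unique. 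For $\deg D_{\mathfrak{w}-g}\leqslant g$ --- in particular the boundary case $\mathfrak{w}=2g$ underlying $\mathcal{R}_{2g}$ --- the statement reduces to the classical fact that a reduced effective divisor of degree at most $g$ on a hyperelliptic curve is non-special, which I would use directly.

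The step I expect to be the main obstacle is exactly this conversion of ``no points in involution'' into ``imposes independent conditions''. It is clean in the range $\deg D_{\mathfrak{w}-g}\leqslant g$, but for $\deg D_{\mathfrak{w}-g}>g$ one must rule out \emph{reduced} yet \emph{special} divisors, so the argument genuinely needs the hyperelliptic structure ($K=(2g-2)\infty$, canonical divisors equal to sums of fibres) rather than the absence of conjugate pairs alone; I would also have to treat the jet form of the conditions at repeated points and the borderline of a branch point in the support (a doubled branch point being itself a fibre). As an independent check --- useful for the explicit genus-two computations that follow --- I would recompute the rank of the interpolation matrix directly, writing a putative low-weight solution as $A(x)+yB(x)$ and using that, absent involution pairs, the points of $D_{\mathfrak{w}-g}$ have pairwise distinct $x$-coordinates, so the norm $A(x)^2-\mathcal{P}(x)B(x)^2$ must vanish at all of them.
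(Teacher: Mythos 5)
Your proposal takes a genuinely different route from the paper: the paper counts the monomials of weight at most $\mathfrak{w}$ and then exhibits $\mathcal{R}_{\mathfrak{w}}$ directly by the confluent interpolation determinant \eqref{DetR}, while you work intrinsically, identifying weight-$\leqslant\mathfrak{w}$ polynomial functions with $L(\mathfrak{w}\infty)$ and reducing uniqueness, via Riemann--Roch and $K=(2g-2)\infty$, to the vanishing $\ell\bigl(D_{\mathfrak{w}-g}-(\mathfrak{w}-2g+2)\infty\bigr)=0$. That reduction, the dimension count, and the existence argument are all correct, and you are right that everything hinges on converting ``no points in involution'' into this non-speciality. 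But that conversion is a genuine gap, and it cannot be closed in the stated generality: the implication is false as soon as $\mathfrak{w}\geqslant 2g+3$. Take $g=2$, $\mathfrak{w}=7$, and let $D_5=(\mathcal{R}_5)_0$ for $\mathcal{R}_5=y+x^2$. For generic $\lambda$ these five points are pairwise distinct and contain no involution pair (such a pair would force $y_i=0$, hence $x_i=0$ and $\lambda_{10}=0$), yet $D_5\sim 5\infty$, so $\ell(7\infty-D_5)=2$: both $x\,\mathcal{R}_5$ and $(x+1)\,\mathcal{R}_5$ are monic weight-$7$ combinations of $\{1,x,x^2,y,x^3,yx\}$ vanishing on $D_5$, hence $\mathcal{R}_7$ is not uniquely determined. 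Your sketched norm argument cannot detect this: for $h\in L(\mathfrak{w}\infty-D)$ the divisor of $h\cdot(h\circ\iota)$ contains $D+\iota D$, which is \emph{automatically} a union of fibres of the $x$-map whether or not $D$ itself contains one, so no contradiction with the hypothesis can be extracted from it.

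You should also know that the paper's own proof founders on exactly the same rock, only less visibly: the formula \eqref{DetRD} defines $\mathcal{R}_{\mathfrak{w}}$ only when its denominator is nonzero, which is precisely the independence of the interpolation conditions, and in the example above that determinant vanishes identically (the $y_i$-column equals minus the $x_i^2$-column along $D_5$). So your honest flagging of the obstacle exposes a real defect of the proposition as stated, not merely of your argument. Both proofs, and the statement itself, are sound exactly in the range $2g\leqslant\mathfrak{w}\leqslant 2g+2$ that the paper actually uses in genus two ($\mathcal{R}_4$, $\mathcal{R}_5$, $\mathcal{R}_6$, $\widetilde{\mathcal{R}}_5$): there $\ell\bigl(D-(\mathfrak{w}-2g+2)\infty\bigr)\geqslant 1$ would place $D$ in $|2\infty|$, $|3\infty|$ or $|4\infty|$, whose members are unions of fibres of the $x$-map, possibly together with $\infty$, contradicting the hypothesis; your classical non-speciality fact covers $\mathfrak{w}=2g$, and this pencil argument covers $\mathfrak{w}=2g+1,\,2g+2$. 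The clean repair for general $\mathfrak{w}$ is to strengthen the hypothesis to $D_{\mathfrak{w}-g}\not\sim(\mathfrak{w}-2g+2)\infty+E$ for any effective $E$ (equivalently, to assume the non-speciality outright), which is exactly what your Riemann--Roch framework makes transparent and what the determinant formulation hides.
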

\begin{proof}
If $\mathfrak{w} \geqslant 2g$, there exist $\mathfrak{w} -g+1$ such monomials, 
since $g$ weights between $0$ and $2g-1$ belong to
the Weierstrass gap sequence. 
A monic polynomial composed as a linear combination of 
 monomials of weights up to $\mathfrak{w}$
has $\mathfrak{w}-g$ degrees of freedom. Thus,
a positive divisor $D_{\mathfrak{w}-g} =\sum_{k=1}^{\mathfrak{w}-g} (x_k,y_k)$
with no groups of points in involution defines $\mathcal{R}_{\mathfrak{w}}$ uniquely.
Indeed, $\mathcal{R}_{\mathfrak{w}}$ is constructed from  $D_{\mathfrak{w}-g}$ with all distinct points
as follows
\begin{subequations}\label{DetR}
\begin{equation}\label{DetRD}
 \mathcal{R}_{\mathfrak{w}}(x,y) = 
 \frac{\small \begin{vmatrix}  
 \mathfrak{m}_{\mathfrak{w}}(x,y) & \mathfrak{m}_{\mathfrak{w}-1}(x,y) 
 & \dots & \mathfrak{m}_0(x,y)  \\
 \mathfrak{m}_{\mathfrak{w}}(x_1,y_1) 
 & \mathfrak{m}_{\mathfrak{w}-1}(x_1,y_1) & \dots &  \mathfrak{m}_0(x_1,y_1) \\
 \vdots & \ddots & \vdots\\
 \mathfrak{m}_{\mathfrak{w}}(x_{\mathfrak{w}-g },y_{\mathfrak{w}-g}) 
 & \mathfrak{m}_{\mathfrak{w}-1}(x_{\mathfrak{w}-g},y_{\mathfrak{w}-g}) & \dots &  
 \mathfrak{m}_0(x_{\mathfrak{w}-g},y_{\mathfrak{w}-g}) 
  \end{vmatrix}}
 {\small \begin{vmatrix}  
 \mathfrak{m}_{\mathfrak{w}-1}(x_1,y_1) & \dots &  \mathfrak{m}_0(x_1,y_1) \\
 \vdots & \ddots & \vdots\\
 \mathfrak{m}_{\mathfrak{w}-1}(x_{\mathfrak{w}-g},y_{\mathfrak{w}-g}) & \dots &  
 \mathfrak{m}_0(x_{\mathfrak{w}-g},y_{\mathfrak{w}-g}) \end{vmatrix}}.
\end{equation}
If some points coincide, say  $P_i =P_1$, $i=2$, \ldots $n$, then row $i+1$ in the numerator and 
row $i$ in the denominator of \eqref{DetRD} are replaced with 
\begin{equation}\label{DetRM}
\Big(\frac{\rmd^{i-1} }{\rmd x^{i-1}} \mathfrak{m}_{\widetilde{\mathfrak{w}}}(x,y(x)) 
\Big|_{\substack{x=x_1\\ y(x_1)=y_1} }\Big).
\end{equation}
\end{subequations}

Therefore, the determinant formula \eqref{DetR} produces a monic function,
which represents $\mathcal{R}_{\mathfrak{w}}$ with 
$D_{\mathfrak{w}-g} \subset (\mathcal{R}_{\mathfrak{w}})_0$  uniquely. 
\end{proof}

\begin{cor}\label{C:Rwl2g}
On a genus $g$ hyperelliptic curve, $\mathcal{R}_{\mathfrak{w}}$ of weights $\mathfrak{w} \leqslant 2g$
are polynomials in $x$ only, and have even weights
$\mathfrak{w} = 2\mathfrak{k}$, $\mathfrak{k}\leqslant g$.
Moreover, $\mathcal{R}_{2\mathfrak{k}}$ is uniquely defined by a set of distinct
$\{x_i \mid i=1,\dots \mathfrak{k}\}$, namely
\begin{gather*}
\mathcal{R}_{2\mathfrak{k}}(x) = \prod_{i=1}^{\mathfrak{k}} (x-x_i),
\end{gather*}
and  $(\mathcal{R}_{\mathfrak{w}})_0$ consists of pairs of points connected by involution:
\begin{gather*}
(\mathcal{R}_{2\mathfrak{k}})_0 = \textstyle \sum_{i=1}^{\mathfrak{k}} \big(\big(x_i,y(x_i) )+ (x_i,-y(x_i))\big).
\end{gather*}
\end{cor}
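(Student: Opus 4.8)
The plan is to read everything off the weight structure of the hyperelliptic curve and then compute the divisor of zeros directly. On a genus $g$ hyperelliptic curve the leading terms are $-y^2 + x^{2g+1}$, so $\wgt x = 2$ and $\wgt y = 2g+1$, and the Weierstrass gap sequence is the set of odd integers $\{1,3,\dots,2g-1\}$. A monomial $x^i y^j$ therefore carries weight $2i + (2g+1)j$. First I would observe that any monomial containing a factor of $y$ (that is, $j\geqslant 1$) has weight at least $2g+1 > 2g$; hence for $\mathfrak{w}\leqslant 2g$ every admissible monomial $\mathfrak{m}_{\widetilde{\mathfrak{w}}}$ with $\widetilde{\mathfrak{w}}\leqslant\mathfrak{w}$ appearing in the preceding Proposition is a pure power of $x$. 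Consequently $\mathcal{R}_{\mathfrak{w}}$ is a polynomial in $x$ alone.

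Next I would pin down the admissible weights. The only monomials of weight at most $2g$ are $1,x,x^2,\dots,x^g$, whose weights are the even numbers $0,2,4,\dots,2g$; the odd values $1,3,\dots,2g-1$ are exactly the gaps and are unattainable below $2g+1$. Thus the weight of $\mathcal{R}_{\mathfrak{w}}$ must be even, $\mathfrak{w}=2\mathfrak{k}$ with $\mathfrak{k}\leqslant g$, and the monic function of this weight is a monic polynomial in $x$ of degree $\mathfrak{k}$. Since over $\Complex$ such a polynomial factors as $\prod_{i=1}^{\mathfrak{k}}(x-x_i)$, and a monic degree-$\mathfrak{k}$ polynomial is determined by, and determines, its $\mathfrak{k}$ roots, the correspondence from a set of distinct values $\{x_i\}$ to $\mathcal{R}_{2\mathfrak{k}}$ is a bijection; this is the asserted uniqueness.

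Finally I would compute the divisor of zeros. By the degree statement recorded after \eqref{ZeroDivRn}, $(\mathcal{R}_{2\mathfrak{k}})_0$ has degree $2\mathfrak{k}$. Solving $\mathcal{R}_{2\mathfrak{k}}(x)=0$ on $\mathcal{C}$ amounts to $x\in\{x_1,\dots,x_{\mathfrak{k}}\}$; for each root the fibre of the hyperelliptic projection $(x,y)\mapsto x$ consists of the two points $(x_i,y(x_i))$ and $(x_i,-y(x_i))$ interchanged by the involution $(x,y)\mapsto(x,-y)$. Assembling these yields $(\mathcal{R}_{2\mathfrak{k}})_0=\sum_{i=1}^{\mathfrak{k}}\big((x_i,y(x_i))+(x_i,-y(x_i))\big)$, a sum of $\mathfrak{k}$ involution pairs of total degree $2\mathfrak{k}$, in agreement with the degree count.

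The argument is essentially a bookkeeping consequence of the gap sequence, so I expect no deep obstacle; the one point requiring care is the exclusion of $y$, which rests squarely on the hyperelliptic fact that the smallest weight attached to a $y$-monomial, $\wgt y = 2g+1$, lies strictly above the range $\mathfrak{w}\leqslant 2g$. For completeness one should note that if some $x_i$ is a branch point, then $y(x_i)=0$ and the corresponding involution pair degenerates to the Weierstrass point $(x_i,0)$ counted twice, so the divisor formula is read with multiplicity; the distinctness hypothesis on the $x_i$ guarantees that no point of $(\mathcal{R}_{2\mathfrak{k}})_0$ is otherwise repeated.
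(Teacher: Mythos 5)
Your proposal is correct and follows essentially the same route as the paper: both arguments rest on the observation that the Weierstrass gap sequence of a hyperelliptic curve consists of the odd numbers $1,3,\dots,2g-1$, so every monomial of weight $\mathfrak{w}\leqslant 2g$ is a pure power of $x$ (since $\wgt y = 2g+1$ exceeds the range), forcing $\mathcal{R}_{\mathfrak{w}}$ to be a monic polynomial in $x$ of even weight. Your write-up merely makes explicit the factorization and the involution-pair structure of $(\mathcal{R}_{2\mathfrak{k}})_0$, which the paper leaves as immediate consequences, and adds a sensible remark about branch-point degeneration.
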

\begin{proof}
The Weierstrass gap sequence on a hyperelliptic curve of genus $g$
has the form $\mathfrak{W} = \{2i-1\mid i=1,\dots g\}$, and monomials of weights $\mathfrak{w} \leqslant 2g$
 have the form $x^k$, $0 \leqslant k \leqslant g$.
Thus, there exist no polynomial functions of odd weights $\mathfrak{w} = 2i-1$, $i=1$, \ldots, $g$,
and all functions of weights $\mathfrak{w} \leqslant 2g$ are polynomials in $x$ only.
\end{proof}

\begin{cor}\label{C:Rwg2gDecomp}
Let $\mathcal{R}_{\mathfrak{w}}$ be a polynomial function of weight $\mathfrak{w} > 2g$
 on a hyperelliptic curve of genus $g$, and $D_{\mathfrak{w}-g} \subset (\mathcal{R}_{\mathfrak{w}})_0$ 
 with $\deg D_{\mathfrak{w}-g} = \mathfrak{w} - g$.
Then 
\begin{itemize}
\item $\mathcal{R}_{\mathfrak{w}}$
 is indecomposable, if $D_{\mathfrak{w}-g}$ contains no pairs of points connected by involution, 
 \item $\mathcal{R}_{\mathfrak{w}}$ has a factor $(x-x_i)$, if $D_{\mathfrak{w}-g}$ contains
 a pair of points connected by involution: $(x_i,y_i)$, $(x_i,-y_i)$.
\end{itemize}
\end{cor}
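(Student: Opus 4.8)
The plan is to pass to the normal form $\mathcal{R}_{\mathfrak{w}} = A(x) + y\,B(x)$ with $A,B\in\mathbb{C}[x]$, which is available for every element of $\mathfrak{P}(\mathcal{C})$ because the relation $y^2=\mathcal{P}(x)$ eliminates all higher powers of $y$. The hyperelliptic involution then acts by $A+yB\mapsto A-yB$, and the whole corollary rests on a single equivalence that I would record first as a lemma: for a point with $y_i\neq 0$, both $(x_i,y_i)$ and $(x_i,-y_i)$ lie in $(\mathcal{R}_{\mathfrak{w}})_0$ if and only if $A(x_i)=B(x_i)=0$, which is in turn equivalent to $(x-x_i)\mid\mathcal{R}_{\mathfrak{w}}$ in $\mathfrak{P}(\mathcal{C})$. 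The forward implication is immediate: from $A(x_i)+y_iB(x_i)=0$ and $A(x_i)-y_iB(x_i)=0$ one adds and subtracts to get $A(x_i)=0$ and $y_iB(x_i)=0$, and $y_i\neq0$ forces $B(x_i)=0$; conversely a common linear factor $(x-x_i)$ of $A$ and $B$ factors out of $A+yB$, and by the weight-two case of Corollary~\ref{C:Rwl2g} its divisor of zeros is exactly the involution pair. Thus having a factor $(x-x_i)$ is the same as $\gcd(A,B)\neq1$, and the same as the presence of an involution pair in $(\mathcal{R}_{\mathfrak{w}})_0$.

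The second bullet is then direct. If $D_{\mathfrak{w}-g}$ contains the pair $(x_i,y_i),(x_i,-y_i)$, these are distinct, so $y_i\neq0$, and both lie in $(\mathcal{R}_{\mathfrak{w}})_0$ because $D_{\mathfrak{w}-g}\subset(\mathcal{R}_{\mathfrak{w}})_0$. The lemma gives $A(x_i)=B(x_i)=0$, hence the asserted factor $(x-x_i)$ of $\mathcal{R}_{\mathfrak{w}}$.

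For the first bullet I would argue by contraposition: assume $\mathcal{R}_{\mathfrak{w}}$ is decomposable, write $\mathcal{R}_{\mathfrak{w}}=(x-x_i)\mathcal{R}_{\mathfrak{w}-2}$, and aim to exhibit an involution pair inside $D_{\mathfrak{w}-g}$. By the lemma the pair $(x_i,\pm y_i)$ certainly lies in $(\mathcal{R}_{\mathfrak{w}})_0$; writing the zero divisor as $(\mathcal{R}_{\mathfrak{w}})_0=D_{\mathfrak{w}-g}+E_g$ with $\deg(\mathcal{R}_{\mathfrak{w}})_0=\mathfrak{w}$ and $\deg E_g=g$, one must show the pair is not split between $D_{\mathfrak{w}-g}$ and the $g$ complementary zeros $E_g$. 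This is exactly where I expect the main obstacle to lie: a priori one partner, say $(x_i,y_i)$, could sit in $D_{\mathfrak{w}-g}$ while its image $(x_i,-y_i)$ is absorbed into $E_g$, so that $D_{\mathfrak{w}-g}$ stays pair-free even though $\mathcal{R}_{\mathfrak{w}}$ factors. To rule out such a cross pair one needs the structural fact that $E_g$ shares no $x$-coordinate with $D_{\mathfrak{w}-g}$; this disjointness is precisely what must be extracted from the determinant construction of Proposition~\ref{P:PolyFunctDef} (or from the non-speciality built into the reduced representatives to which the corollary is applied), and I regard establishing it as the real content of the first bullet. Once the disjointness is in hand, an involution pair, whose two points share the coordinate $x_i$, cannot straddle $D_{\mathfrak{w}-g}$ and $E_g$, so it lies entirely in $D_{\mathfrak{w}-g}$, contradicting the hypothesis and yielding indecomposability. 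The routine parts, namely the normal form, the $\gcd$--factor dictionary, and the degree bookkeeping, I would keep to brief computations, concentrating the effort on the disjointness of $E_g$ from $D_{\mathfrak{w}-g}$, which carries the whole argument.
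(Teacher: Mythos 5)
Your handling of the second bullet is correct, and it takes a genuinely different route from the paper: the paper constructs $\mathcal{R}_{\mathfrak{w}}$ from the determinant formula \eqref{DetRD} and extracts the factor $(x-x_1)$ by row operations on the two rows corresponding to $(x_1,y_1)$ and $(x_1,-y_1)$, whereas you work with the normal form $\mathcal{R}_{\mathfrak{w}}=A(x)+yB(x)$ and the evaluation argument $A(x_i)\pm y_iB(x_i)=0\Rightarrow A(x_i)=B(x_i)=0\Rightarrow (x-x_i)\mid\mathcal{R}_{\mathfrak{w}}$. Your dictionary between the linear factor, the condition $A(x_i)=B(x_i)=0$, and the presence of the involution pair is cleaner and avoids determinants; the only case both you and the paper pass over silently is a pair degenerating to a doubled branch point ($y_i=0$), where one needs vanishing of the first two coefficients of the expansion in the local parameter rather than evaluation at two distinct points. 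Note that the paper's own proof establishes \emph{only} this bullet; the first bullet is asserted without argument there.

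The genuine gap is in your first bullet, and it cannot be repaired along the line you propose. The disjointness you single out as ``the real content'' --- that the complementary divisor $E_g$ shares no $x$-coordinate with $D_{\mathfrak{w}-g}$ --- is false in general, and so is the first bullet itself once $\mathfrak{w}\geqslant 2g+3$. Concretely, on the genus-two curve take a generic $\mathcal{R}_5=y+\beta_3x+\beta_5$ with zero divisor $Q_1+\cdots+Q_5$ (pairwise distinct $x$-coordinates), choose $c$ which is neither a branch point nor an $x$-coordinate of any $Q_j$, let $y_c^2=\mathcal{P}(c)$, and set $\mathcal{R}_7=(x-c)\mathcal{R}_5$. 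Then $(\mathcal{R}_7)_0=(c,y_c)+(c,-y_c)+Q_1+\cdots+Q_5$, and $D_5=(c,y_c)+Q_1+Q_2+Q_3+Q_4$ is a subdivisor of degree $\mathfrak{w}-g=5$ containing no involution pair, even though $\mathcal{R}_7$ is manifestly decomposable: the pair straddles $D_5$ and $E_2=(c,-y_c)+Q_5$ exactly in the way you feared, and $\mathcal{R}_7$ is moreover the \emph{unique} monic weight-$7$ function through $D_5$, so the problem persists even if one insists that $D_{\mathfrak{w}-g}$ determines $\mathcal{R}_{\mathfrak{w}}$ via Proposition~\ref{P:PolyFunctDef}. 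The first bullet is salvageable only in the range $2g<\mathfrak{w}\leqslant 2g+2$ (for genus two: $\mathfrak{w}=5,6$, which are the only cases the paper uses): there, weight counting together with Corollary~\ref{C:Rwl2g} forces every factor of a decomposable $\mathcal{R}_{\mathfrak{w}}$ to be a polynomial in $x$ alone, hence $(\mathcal{R}_{\mathfrak{w}})_0$ consists of $\mathfrak{w}/2$ involution pairs, and since $\mathfrak{w}-g>\mathfrak{w}/2$ the pigeonhole principle places a full pair inside $D_{\mathfrak{w}-g}$. That restriction-plus-pigeonhole argument, rather than contraposition through a disjointness claim, is the only way to a correct proof.
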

\begin{proof}
Suppose, $D_{\mathfrak{w}-g}$ contains a pair of points connected by involution,
say $P_1=(x_1,y_1)$, $P_2 = (x_1,-y_1)$. We use the formula \eqref{DetR} to construct
$\mathcal{R}_{\mathfrak{w}}$.
In the numerator we do the following operations with rows 2 and 3:
\begin{align*}
 &(\mathfrak{m}_{\widetilde{\mathfrak{w}}}(x_1,y_1) ) 
\mapsto  (\mathfrak{m}_{\widetilde{\mathfrak{w}}}(x_1,y_1) + \mathfrak{m}_{\widetilde{\mathfrak{w}}}(x_1,-y_1) ),\\
 &(\mathfrak{m}_{\widetilde{\mathfrak{w}}}(x_1,-y_1) ) 
\mapsto  (\mathfrak{m}_{\widetilde{\mathfrak{w}}}(x_1,-y_1) - \mathfrak{m}_{\widetilde{\mathfrak{w}}}(x_1,y_1) )
\end{align*}
Then the first three rows acquire the form
\begin{gather*}
\left| \begin{array}{ccccccccccc}
1 & x & \dots & x^g & y & x^{g+1} & y x & \dots & x^{k+g+1} & y x^k  & \dots \\
2 & 2x_1 & \dots & 2x_1^g & 0 & 2x_1^{g+1} & 0 & \dots & 2x_1^{k+g+1} & 0 & \dots \\
0 & 0 & \dots & 0 & -2y_1 & 0 & -2 y_1 x_1 & \dots & 0 & -2y_1 x_1^k  & \dots \\
\vdots & \vdots & \vdots & \vdots & \vdots & \vdots & \vdots & \vdots & \vdots & \vdots & \vdots 
\end{array} \right| \sim \\
-4 y_1 \left| \begin{array}{ccccccccccc}
1 & x & \dots & x^g & y & x^{g+1} & y x & \dots & x^{k+g+1} & y x^k  & \dots \\
1 & x_1 & \dots & x_1^g & 0 & x_1^{g+1} & 0 & \dots & x_1^{k+g+1} & 0 & \dots \\
0 & 0 & \dots & 0 & 1 & 0 & x_1 & \dots & 0 & x_1^k  & \dots \\
\vdots & \vdots & \vdots & \vdots & \vdots & \vdots & \vdots & \vdots & \vdots & \vdots & \vdots
\end{array} \right|.
\end{gather*}
Subtracting row 2, and row 3 multiplied by $y$ from row 1, we extract
the common multiple $(x-x_1)$, which represents the pair of points in involution.
\end{proof}

Inversion and addition of divisors are conveniently implemented by means of polynomial functions
from $\mathfrak{P}(\mathcal{C})$,
see \cite{bl2005}.
\begin{theo}
The inversion of a non-special divisor $D_{g}$ of degree $g$  
on a  curve  of genus~$g$ is defined by
the polynomial function $\mathcal{R}_{2g}$ of weight $2g$ 
with $D_{g} \subset (\mathcal{R}_{2g})_0$.
 \end{theo}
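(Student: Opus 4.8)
The plan is to make the function $\mathcal{R}_{2g}$ completely explicit and then read off the inverse divisor from its zeros by Abel's theorem. Write $D_g = \sum_{k=1}^g (x_k,y_k)$ with distinct $x_k$; this is exactly the data to which Corollary~\ref{C:Rwl2g} applies, since a non-special divisor of degree $g$ on a genus $g$ hyperelliptic curve contains no pair of points in involution. Taking $\mathfrak{k}=g$ in that corollary, there is a unique weight-$2g$ polynomial function vanishing on $D_g$, namely the monic $\mathcal{R}_{2g}(x)=\prod_{k=1}^g(x-x_k)$, a polynomial in $x$ alone whose full divisor of zeros is
\begin{equation*}
(\mathcal{R}_{2g})_0 = \sum_{k=1}^g\big((x_k,y_k)+(x_k,-y_k)\big) = D_g + D_g^{-},
\end{equation*}
where $D_g^{-}=\sum_{k=1}^g(x_k,-y_k)$ is the image of $D_g$ under the hyperelliptic involution $(x,y)\mapsto(x,-y)$.

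Next I would view $\mathcal{R}_{2g}$ as a rational function on $\mathcal{C}$ and compute its principal divisor. Being a polynomial in $x,y$, its only pole sits at infinity; and since the Sato weight records the order of the pole at the infinite branch point, as the expansion \eqref{param} shows (the leading term $x^g=\xi^{-2g}$), this pole has order exactly $2g$. Hence
\begin{equation*}
(\mathcal{R}_{2g}) = (\mathcal{R}_{2g})_0 - 2g\,\infty = (D_g - g\,\infty) + (D_g^{-} - g\,\infty)
\end{equation*}
is principal. Applying the Abel map and using that principal divisors map to $0$ in $\Jac(\mathcal{C})=\Complex^2/\{\omega,\omega'\}$, I obtain $\mathcal{A}(D_g)+\mathcal{A}(D_g^{-})=0$, i.e. $\mathcal{A}(D_g^{-})=-\mathcal{A}(D_g)$.

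It then remains to check that $D_g^{-}$ is genuinely the reduced representative of the inverse class. The key point is that $D_g^{-}$ shares with $D_g$ the property of containing no pair of points in involution, so it is again a degree-$g$ non-special divisor and therefore the unique reduced positive part of its class; by the previous step this class is $-\mathcal{A}(D_g)$. Consequently the inverse of $D_g$ is precisely the complement $D_g^{-}=(\mathcal{R}_{2g})_0-D_g$ of $D_g$ inside the zeros of $\mathcal{R}_{2g}$, which is exactly the statement that the inversion of $D_g$ is defined by $\mathcal{R}_{2g}$ with $D_g\subset(\mathcal{R}_{2g})_0$.

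The main obstacle will be the conceptual identification carried out in the second and third steps: that the hyperelliptic involution coincides with group inversion on $\Jac(\mathcal{C})$. This rests essentially on $\infty$ being simultaneously the Abel basepoint and a Weierstrass branch point, so that $\mathcal{A}((x,-y))=-\mathcal{A}((x,y))$ holds pointwise; everything else is bookkeeping (confirming $\mathcal{R}_{2g}$ has no finite poles and pole order exactly $2g$). The only genuinely separate case is when $D_g$ has a repeated point, where $\mathcal{R}_{2g}=\prod(x-x_k)^{m_k}$ must be built through the confluent rows \eqref{DetRM}; but this is the same limiting argument already used in Proposition~\ref{P:PolyFunctDef}, and it again yields $(\mathcal{R}_{2g})_0=D_g+D_g^{-}$, so the conclusion is unchanged.
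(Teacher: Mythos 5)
Your proof is correct, but it takes a more explicit and more specialized route than the paper's. The paper's entire argument is two lines and never looks inside $\mathcal{R}_{2g}$: writing $(\mathcal{R}_{2g})_0 = D_g + D_g^\ast$, it invokes only the fact that the full divisor of zeros of a polynomial function is linearly equivalent to $2g\,\infty = O$ (poles only at infinity, of order equal to the weight), whence $D_g^\ast = -D_g$ immediately. You establish the same linear-equivalence fact (via Abel's theorem applied to the principal divisor of $\mathcal{R}_{2g}$), but you first invoke Corollary~\ref{C:Rwl2g} to write $\mathcal{R}_{2g}=\prod_k(x-x_k)$, identify the complement concretely as the hyperelliptic-involution image $D_g^-$, then verify that $D_g^-$ is again non-special and reduced, and finally patch the repeated-point case through the confluent rows \eqref{DetRM}. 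What your route buys is the explicit description $-D_g = D_g^-$, i.e.\ that inversion on $\Jac(\mathcal{C})$ is induced pointwise by $(x,y)\mapsto(x,-y)$ --- genuinely useful information, though it is already implicit in Corollary~\ref{C:Rwl2g}. What it costs is generality: the theorem is stated for a curve of genus $g$, not only a hyperelliptic one (it is the \emph{next} theorem in the paper that is restricted to hyperelliptic curves), and your identification of $(\mathcal{R}_{2g})_0$ with $D_g+D_g^-$, as well as the non-speciality criterion ``no pair of points in involution,'' is special to the hyperelliptic case; for a non-hyperelliptic $(n,s)$-curve the complement of $D_g$ in $(\mathcal{R}_{2g})_0$ is not an involution image at all. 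If you drop the explicit form of $\mathcal{R}_{2g}$ and apply your Abel-theorem step directly to the abstract complement divisor $D_g^\ast$, your argument collapses to the paper's and holds in full generality.
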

 \begin{proof}
Let a  degree $g$ divisor $D^\ast_{g}$ be the complement  of $D_{g}$ in $(\mathcal{R}_{2g})_0$, that is
$(\mathcal{R}_{2g})_0 =  D_{g}  + D^\ast_{g}$. Since $(\mathcal{R}_{2g})_0 \sim 2g \infty = O$,
 $D^\ast_{g}$ serves as the inverse of $D_{g}$, that is $D^\ast_{g} = - D_{g} $.
 \end{proof}

\begin{theo}
The inversion of a special divisor $D_{\mathfrak{k}}$ of degree  $\mathfrak{k} < g$
on a hyperelliptic curve of genus~$g$ is defined by
the polynomial function $\mathcal{R}_{2\mathfrak{k}}$ of weight $2\mathfrak{k}$ 
with $D_{\mathfrak{k}} \subset (\mathcal{R}_{2\mathfrak{k}})_0$.
 \end{theo}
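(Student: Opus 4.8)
The plan is to follow the proof of the preceding theorem for non-special divisors, but to replace the generic statement about weight-$2g$ functions by the sharper description of low-weight polynomial functions furnished by Corollary~\ref{C:Rwl2g}. Write the reduced special divisor by its positive part, $D_{\mathfrak{k}} = \sum_{i=1}^{\mathfrak{k}} (x_i,y_i)$ with $\mathfrak{k} < g$; since $D_{\mathfrak{k}}$ is reduced it contains no pair of points in involution, so the $x_i$ are distinct. Because $2\mathfrak{k} \leqslant 2g$, Corollary~\ref{C:Rwl2g} applies verbatim: the polynomial function $\mathcal{R}_{2\mathfrak{k}}$ with $D_{\mathfrak{k}} \subset (\mathcal{R}_{2\mathfrak{k}})_0$ is a polynomial in $x$ alone, $\mathcal{R}_{2\mathfrak{k}}(x) = \prod_{i=1}^{\mathfrak{k}} (x - x_i)$, and its divisor of zeros splits into involution pairs,
\begin{equation*}
(\mathcal{R}_{2\mathfrak{k}})_0 = \sum_{i=1}^{\mathfrak{k}} \big( (x_i,y_i) + (x_i,-y_i) \big) = D_{\mathfrak{k}} + D_{\mathfrak{k}}^\ast, \qquad D_{\mathfrak{k}}^\ast = \sum_{i=1}^{\mathfrak{k}} (x_i,-y_i).
\end{equation*}
Thus the complement of $D_{\mathfrak{k}}$ in $(\mathcal{R}_{2\mathfrak{k}})_0$ is precisely the image $D_{\mathfrak{k}}^\ast$ of $D_{\mathfrak{k}}$ under the hyperelliptic involution, and the uniqueness of $\mathcal{R}_{2\mathfrak{k}}$ is guaranteed by the same corollary through the distinct $x_i$.

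Next I would apply the Abel map to this splitting. Since $\rmd u$ is odd under the involution $(x,y) \mapsto (x,-y)$, one has $\mathcal{A}\big((x_i,-y_i)\big) = -\mathcal{A}\big((x_i,y_i)\big)$, so the involution pairs cancel:
\begin{equation*}
\mathcal{A}(D_{\mathfrak{k}}) + \mathcal{A}(D_{\mathfrak{k}}^\ast) = \sum_{i=1}^{\mathfrak{k}} \Big( \mathcal{A}\big((x_i,y_i)\big) + \mathcal{A}\big((x_i,-y_i)\big) \Big) = 0.
\end{equation*}
This is the exact counterpart of the relation $(\mathcal{R}_{2g})_0 \sim 2g\,\infty = O$ used in the non-special case: here $(\mathcal{R}_{2\mathfrak{k}})_0 \sim 2\mathfrak{k}\,\infty$ collapses to the neutral element of $\Jac(\mathcal{C})$ under the Abel map, because $\infty$ is its basepoint. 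Consequently $\mathcal{A}(D_{\mathfrak{k}}^\ast) = -\mathcal{A}(D_{\mathfrak{k}})$, that is $D_{\mathfrak{k}}^\ast = -D_{\mathfrak{k}}$ in $\Jac(\mathcal{C})$, which is the asserted inversion. It remains to record that $D_{\mathfrak{k}}^\ast$ is a legitimate representative of the inverse class: it has degree $2\mathfrak{k} - \mathfrak{k} = \mathfrak{k} < g$, hence is special, and its $x$-coordinates are the distinct $x_i$, so it contains no pair of points in involution and requires no further reduction.

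I expect the only genuine point of care to be the degree bookkeeping rather than any analytic input. Since each divisor is recorded by its positive part of degree $\mathfrak{k} < g$, one must keep track of the suppressed copies of $\infty$ and cancel the correct multiple of $\mathcal{A}(\infty)=0$ when passing to the Jacobian, so that $(\mathcal{R}_{2\mathfrak{k}})_0 \sim 2\mathfrak{k}\,\infty$ indeed maps to the neutral element. Once this is kept consistent, the statement is a direct transcription of the non-special inversion theorem, with Corollary~\ref{C:Rwl2g} supplying the explicit splitting of $(\mathcal{R}_{2\mathfrak{k}})_0$ into points connected by involution.
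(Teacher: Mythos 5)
Your proposal is correct and follows essentially the same route as the paper: both rest on Corollary~\ref{C:Rwl2g} to show that $\mathcal{R}_{2\mathfrak{k}}$ is the polynomial $\prod_i (x-x_i)$ whose zero divisor splits into involution pairs, so that the complement of $D_{\mathfrak{k}}$ is its involution image $-D_{\mathfrak{k}}$. Your explicit Abel-map computation (oddness of $\rmd u$ under $(x,y)\mapsto(x,-y)$, with $\mathcal{A}(\infty)=0$) merely spells out the step the paper leaves implicit in citing the corollary and the principal-divisor equivalence $(\mathcal{R}_{2\mathfrak{k}})_0 \sim 2\mathfrak{k}\,\infty$.
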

\begin{proof}
Let $D_{\mathfrak{k}} = \sum_{k=1}^{\mathfrak{k}} (x_k,y_k)$.
From this divisor a polynomial function is constructed as a linear combination of the first $\mathfrak{k}$
monomials from the ordered list $\mathfrak{M}$. Such monomials are $1$, $x$, \ldots, $x^{\mathfrak{k}-1}$, if
the curve is hyperelliptic. Thus, we obtain a funciton
$\mathcal{R}_{2\mathfrak{k}}$ of weight $2\mathfrak{k}$, which is a polynomial in $x$ only; and the 
complement of $D_{\mathfrak{k}}$ is $-D_{\mathfrak{k}}$, as follows from
 Corollary~\ref{C:Rwl2g}.
\end{proof}

\begin{theo}\label{T:Add}
The addition  of two degree $g$ non-special divisors $D_g$, $\tilde{D}_g$ 
on  a  curve of genus~$g$ is defined by the
polynomial function $\mathcal{R}_{3g}$  of weight $3g$
with $D_{g} + \tilde{D}_g \subset (\mathcal{R}_{3g})_0$.
 \end{theo}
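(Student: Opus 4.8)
The plan is to reproduce the mechanism of the two preceding inversion theorems, the new ingredient being that a polynomial function of weight $3g$ has its entire polar divisor supported at infinity with multiplicity $3g$.

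First I would construct $\mathcal{R}_{3g}$ by Proposition~\ref{P:PolyFunctDef}. Since $3g \geqslant 2g$, that proposition determines $\mathcal{R}_{3g}$ uniquely from a prescribed positive divisor of degree $3g - g = 2g$ inside its zero divisor, built as a linear combination of the first $3g - g + 1 = 2g+1$ monomials of $\mathfrak{M}$ through the determinant formula~\eqref{DetR}. I would take this prescribed part to be $D_g + \tilde{D}_g$, whose degree is exactly $2g$; for non-special $D_g$, $\tilde{D}_g$ in general position their union contains no pair of points in involution, so the hypotheses of Proposition~\ref{P:PolyFunctDef} hold and $\mathcal{R}_{3g}$ is well defined, with $D_g + \tilde{D}_g \subset (\mathcal{R}_{3g})_0$.

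Next, because $\deg (\mathcal{R}_{3g})_0 = 3g$ while $\deg (D_g + \tilde{D}_g) = 2g$, the complement $D^\ast$ in $(\mathcal{R}_{3g})_0 = D_g + \tilde{D}_g + D^\ast$ is a positive divisor of degree $g$. The heart of the argument is the linear equivalence $(\mathcal{R}_{3g})_0 \sim 3g\,\infty$: viewed as an element of $\mathfrak{P}(\mathcal{C})$, the function $\mathcal{R}_{3g}$ is regular away from infinity and has there a pole whose order equals its weight $3g$, so its principal divisor is $(\mathcal{R}_{3g})_0 - 3g\,\infty$. Applying the Abel map based at $\infty$ (under which every multiple of $\infty$ maps to $0$) then gives $\mathcal{A}(D_g) + \mathcal{A}(\tilde{D}_g) + \mathcal{A}(D^\ast) = 0$ in $\Jac(\mathcal{C})$, so $D^\ast$ represents $-(D_g + \tilde{D}_g)$ on the Jacobian. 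The sought sum is therefore $-D^\ast$, recovered by one further inversion of the degree $g$ divisor $D^\ast$ through the weight $2g$ function $\mathcal{R}_{2g}$ of the first inversion theorem.

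I expect the main obstacle to lie not in this equivalence but in the degenerate configurations. If $D_g + \tilde{D}_g$ contains an involution-conjugate pair $(x_i,y_i)$, $(x_i,-y_i)$, then by Corollary~\ref{C:Rwg2gDecomp} the factor $(x - x_i)$ splits off $\mathcal{R}_{3g}$ and the effective positive part drops in degree; likewise the complement $D^\ast$ may itself turn out special, in which case the second inversion theorem must be used instead. Verifying that these cancellations are exactly the ones dictated by $\Jac(\mathcal{C})$, so that the reduced sum is still recovered, is the delicate bookkeeping the proof must address.
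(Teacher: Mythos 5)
Your proposal is correct and follows essentially the same route as the paper: construct $\mathcal{R}_{3g}$ from $D_g + \tilde{D}_g$ via Proposition~\ref{P:PolyFunctDef}, then use $(\mathcal{R}_{3g})_0 \sim 3g\,\infty$ to identify the degree-$g$ complement $D^\ast$ as the inverse of the sum, so that $-D^\ast$ is the reduced sum. Your added justification of the linear equivalence through the Abel map and your remarks on degenerate configurations only make explicit what the paper's terser proof leaves implicit.
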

\begin{proof}
The required function $\mathcal{R}_{3g}$ is constructed from $D_g$ and $\tilde{D}_g$,
according to Proposition~\ref{P:PolyFunctDef}. Then the complement divisor $D_g^\ast$
such that $D_{g} + \tilde{D}_g + D_g^\ast = (\mathcal{R}_{3g})_0$ is the inverse of $D_{g} + \tilde{D}_g$,
and so $-D_g^\ast$ is the reduced divisor equivalent to  the sum $D_{g} + \tilde{D}_g$.
\end{proof}

\begin{theo}
The addition  of two  divisors $D_m$, $D_n$ of degrees $m$ and $n$ 
on a hyperelliptic curve of genus~$g$ is defined by the
polynomial function $\mathcal{R}_{m+n+g}$  of weight $m+n+g$
with $D_m + D_n\subset (\mathcal{R}_{3g})_0$, if $m+n\geqslant g$
and $D_m \,{+}\, D_n$ contains no points in involution.
 \end{theo}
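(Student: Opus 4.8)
The plan is to run the argument of Theorem~\ref{T:Add} verbatim, with the single degree $g$ replaced by the two degrees $m$ and $n$, and to check that the hypothesis $m+n\geqslant g$ is exactly what keeps the construction inside the range where Proposition~\ref{P:PolyFunctDef} applies. First I would set the weight $\mathfrak{w}=m+n+g$. The inequality $m+n\geqslant g$ gives $\mathfrak{w}\geqslant 2g$, so the weight is large enough to invoke Proposition~\ref{P:PolyFunctDef}. The positive divisor $D_m+D_n$ has degree $m+n=\mathfrak{w}-g$, and by assumption contains no two points connected by involution. Hence Proposition~\ref{P:PolyFunctDef} produces a unique monic polynomial function $\mathcal{R}_{m+n+g}$ of weight $\mathfrak{w}$ with $D_m+D_n\subset(\mathcal{R}_{m+n+g})_0$; concretely it is the linear combination of the first $\mathfrak{w}-g+1=m+n+1$ monomials of the ordered list $\mathfrak{M}$ delivered by the determinant formula~\eqref{DetR}.

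Next I would examine the complement. Since the divisor of zeros $(\mathcal{R}_{m+n+g})_0$ has degree $\mathfrak{w}=m+n+g$, the complement $$D^\ast:=(\mathcal{R}_{m+n+g})_0-(D_m+D_n)$$ is a positive divisor of degree $g$. Because every divisor of zeros of a polynomial function is cut out by the system~\eqref{ZeroDivRn} and is therefore linearly equivalent to $\mathfrak{w}\,\infty=O$, applying the Abel map yields $\mathcal{A}(D_m+D_n)+\mathcal{A}(D^\ast)=0$. Consequently the inverse $-D^\ast$, obtained by applying the hyperelliptic involution to each point of $D^\ast$, is a degree $g$ divisor equivalent to $D_m+D_n$, which is the asserted reduced divisor representing the sum.

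The main point that must be verified rather than assumed is the role of the hypothesis $m+n\geqslant g$: it is precisely the condition ensuring $\mathfrak{w}\geqslant 2g$, so that the $\mathfrak{w}-g+1$ monomials required by Proposition~\ref{P:PolyFunctDef} genuinely exist, the $g$ gaps of the Weierstrass sequence all lying below $2g$. The hypothesis that $D_m+D_n$ contains no points in involution is needed for the uniqueness clause of that proposition; without it the determinant~\eqref{DetR} degenerates, and, as Corollary~\ref{C:Rwg2gDecomp} shows, $\mathcal{R}_{m+n+g}$ would split off factors $(x-x_i)$ that lower its effective degree and spoil the degree count of $D^\ast$. A final bookkeeping step is to note that $D^\ast$ has degree exactly $g$, so that $-D^\ast$ is a legitimate divisor of degree $\leqslant g$, possibly subject to further reduction if $D^\ast$ should itself contain an involutive pair.
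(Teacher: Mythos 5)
Your proposal is correct and follows exactly the route the paper intends: the paper's proof is simply "similar to the proof of Theorem~\ref{T:Add}," i.e., construct $\mathcal{R}_{m+n+g}$ from $D_m+D_n$ via Proposition~\ref{P:PolyFunctDef}, then note that the degree-$g$ complement $D^\ast$ in $(\mathcal{R}_{m+n+g})_0$ satisfies $D_m+D_n+D^\ast\sim O$, so $-D^\ast$ is the reduced sum. Your additional bookkeeping (that $m+n\geqslant g$ is exactly the condition $\mathfrak{w}\geqslant 2g$ needed for the monomial count, and that the no-involution hypothesis guards the uniqueness/non-degeneracy of the determinant construction) is consistent with the paper and fills in the details the paper leaves implicit.
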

A proof is similar to the proof of Theorem~\ref{T:Add}.

\begin{theo}\label{P:NSDgdef}
A non-special divisor $D_g$ of degree $g$ on a hyperelliptic curve of genus $g$ 
 is uniquely defined by a pair of functions $\mathcal{R}_{2g}$, $\mathcal{R}_{2g+1}$.
\end{theo}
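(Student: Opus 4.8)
The plan is to realize the pair $(\mathcal{R}_{2g}, \mathcal{R}_{2g+1})$ as the genus-$g$ analogue of the polynomials $\mathcal{R}_{4}$, $\mathcal{R}_{5}$ of Proposition~\ref{P:JIPr}, and to show that $D_g$ is exactly their common divisor of zeros. Write $D_g = \sum_{i=1}^g (x_i, y_i)$; since $D_g$ is non-special it contains no pair of points in involution, so the abscissae $x_1, \dots, x_g$ are pairwise distinct. First I would produce $\mathcal{R}_{2g}$: by Corollary~\ref{C:Rwl2g} the unique monic weight-$2g$ function vanishing on $D_g$ is the polynomial in $x$ only $\mathcal{R}_{2g}(x) = \prod_{i=1}^g (x - x_i)$, whose divisor of zeros is $D_g + D_g^{\iota}$ with $D_g^{\iota} = \sum_i (x_i, -y_i)$. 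Thus $\mathcal{R}_{2g}$ records precisely the abscissae of $D_g$ and is determined by $D_g$.

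Next I would construct $\mathcal{R}_{2g+1}$. On a genus-$g$ hyperelliptic curve the monomials of weight at most $2g+1$ are $1, x, \dots, x^g, y$, the last of weight $2g+1 = \wgt y$; hence by Proposition~\ref{P:PolyFunctDef} every weight-$2g+1$ function has the form $\mathcal{R}_{2g+1}(x,y) = y + \sum_{k=0}^{g} c_k x^k$, monic in $y$, with $g+1$ free coefficients. Requiring $\mathcal{R}_{2g+1}$ to vanish on $D_g$ imposes the $g$ linear conditions $y_i = -\sum_{k=0}^g c_k x_i^k$, which leaves a one-parameter family. I would normalize by demanding that the $x$-part have degree at most $g-1$, equivalently by reducing modulo $\mathcal{R}_{2g}$, so that $\mathcal{R}_{2g+1} = y + v(x)$ with $\deg v \le g-1$; since the $x_i$ are distinct, Lagrange interpolation makes the coefficients of $v$ the unique solution of $v(x_i) = -y_i$, and $\mathcal{R}_{2g+1}$ becomes uniquely determined by $D_g$. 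This matches the genus-two case, where $\mathcal{R}_{5} = y + \beta_3 x + \beta_5$ indeed has $x$-degree $g-1=1$.

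With both functions in hand I would verify that $D_g$ is their common divisor of zeros. The system $\mathcal{R}_{2g}(x) = 0$, $\mathcal{R}_{2g+1}(x,y) = 0$, together with $f(x,y;\lambda)=0$, forces $x \in \{x_1,\dots,x_g\}$ and, because $\mathcal{R}_{2g+1}$ is linear in $y$, a single ordinate $y = -v(x)$ for each such abscissa; evaluating returns exactly the $g$ points $(x_i,y_i)$, so the common divisor of zeros is precisely $D_g$ and has degree $g$. For the converse direction, that the pair determines $D_g$ and that distinct divisors yield distinct pairs, I would note that $\mathcal{R}_{2g}$ recovers the set $\{x_i\}$ while $\mathcal{R}_{2g+1}$ then recovers the matching $y_i = -v(x_i)$; hence the assignment $D_g \mapsto (\mathcal{R}_{2g}, \mathcal{R}_{2g+1})$ is injective and invertible onto its image.

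The main obstacle is the degree mismatch in defining $\mathcal{R}_{2g+1}$: by Proposition~\ref{P:PolyFunctDef} a weight-$2g+1$ function is pinned down by a positive divisor of degree $g+1$, one more than $\deg D_g = g$, so $D_g$ alone leaves the one-parameter ambiguity $\mathcal{R}_{2g+1} \mapsto \mathcal{R}_{2g+1} + c\,\mathcal{R}_{2g}$. The point to argue carefully is that this ambiguity is harmless for the pair: adding a multiple of $\mathcal{R}_{2g}$ changes neither the common zero locus, where $\mathcal{R}_{2g}=0$, nor the canonically reduced representative with $\deg_x v \le g-1$. Establishing that reduction modulo $\mathcal{R}_{2g}$ in the ring $\mathfrak{P}(\mathcal{C})$ is well defined and yields a unique normalized $\mathcal{R}_{2g+1}$ is the crux; once it is settled, the equivalence between $D_g$ and the normalized pair follows from the distinctness of the $x_i$ guaranteed by non-speciality.
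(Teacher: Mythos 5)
Your construction follows essentially the same route as the paper's proof: $\mathcal{R}_{2g}(x)=\prod_{i}(x-x_i)$ recording the abscissae (Corollary~\ref{C:Rwl2g}), and $\mathcal{R}_{2g+1}=y+v(x)$ pinned down by the extra normalization that the coefficient of $\mathfrak{m}_{2g}=x^g$ vanishes --- your condition $\deg_x v\leqslant g-1$, i.e.\ reduction modulo $\mathcal{R}_{2g}$, is exactly the paper's ``additional condition,'' and your identification of the resulting pair with the Jacobi inversion data of Proposition~\ref{P:JIPr} is the paper's closing step. So the architecture is right.

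However, there is one genuine error: the claim that non-speciality forces the abscissae $x_1,\dots,x_g$ to be pairwise distinct. Non-speciality on a hyperelliptic curve excludes pairs of points in \emph{involution}, i.e.\ $(x_i,y_i)$ and $(x_i,-y_i)$ both occurring (including a Weierstrass point $(x_i,0)$ counted twice); it does not exclude a repeated point $(x_i,y_i)$ with $y_i\neq 0$. For instance, in genus two the divisor $D_2=2P$ with $P$ not a Weierstrass point is non-special, and the paper explicitly keeps such divisors in scope: Proposition~\ref{P:PolyFunctDef} states ``repeated points are allowed,'' and the whole duplication law of Section~4 is built on exactly this configuration. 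For such a divisor your Lagrange interpolation step collapses (the interpolation nodes coincide), so the uniqueness of $v$ is not established by your argument. The repair is the confluent (Hermite) version already supplied by the paper: at a point of multiplicity $k$ one replaces the evaluation conditions $v(x_i)=-y_i$ by the requirement that $y+v(x)$ vanish to order $k$ along the curve, i.e.\ by the derivative rows \eqref{DetRM}. These again give $g$ independent linear conditions on the $g$ coefficients of $v$, so the normalized $\mathcal{R}_{2g+1}$, and hence the pair, remains uniquely determined; and conversely the common divisor of zeros, counted with multiplicities, recovers $D_g$. With this amendment your proof goes through and coincides with the paper's.
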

\begin{proof}
The function $\mathcal{R}_{2g}$ is defined by $x$-coordinates of the support of $D_g$.
The same divisor $D_g$ can be used to construct $\mathcal{R}_{2g+1}$, according to 
Proposition~\ref{P:PolyFunctDef}, with an addition condition: the coefficient of monomial $\mathfrak{m}_{2g}$
vanishes.
Such a pair of functions define a solution of the Jacobi inversion problem, see Proposition~\ref{P:JIPr}.
\end{proof}

\begin{rem}
If the two functions $\mathcal{R}_{2g}$, $\mathcal{R}_{2g+1}$
define $D_{g}$, then $(\mathcal{R}_{2g})_0 \,{=}\, D_{g} + D_{g}^{\ast}$,
where $D_{g}^{\ast} = - D_{g}$, 
and  $(\mathcal{R}_{2g+1})_0 = D_{g} + D_{g+1}$. We also have
${-}D_{g} \,{+}\, ({-} D_{g+1}) = (\mathcal{R}_{2g+1}^{-})_0$, where $\mathcal{R}_{2g+1}^{-}$
denotes the function $\mathcal{R}_{2g+1}(x,-y)$.
\end{rem}

\section{Addition and duplication}
Below, the addition and duplication laws are derived according to \cite{bl2005}.

\subsection{Addition law}
Addition of two degree $2$ non-special divisors is defined by a polynomial function of weight~$6$,
which has the form
$$\mathcal{R}_6(x,y) = x^3 + \gamma_1 y + \gamma_2 x^2 + \gamma_4 x + \gamma_6.$$
Let $\mathcal{R}_6$ be defined by  $D_4 \subset (\mathcal{R}_6)_0$ such that 
$D_4 = D_{2P} + D_{2Q} = (P_1 + P_2) + (Q_1+Q_2)$
without repeated\footnote{There is no $n$-torsion divisors which contain repeated points. See the definition of 
an $n$-torsion divisor in the next section.} 
points, see Proposition~\ref{P:PolyFunctDef}. 
According to Proposition~~\ref{P:JIPr} and 
Remark~\ref{R:JIPr}, let $D_{2A}$ be defined by $\mathcal{R}^{[A]}_{4}$ and $\mathcal{R}^{[A]}_{5}$
with Mumford coordinates $\alpha^{[A]}_2$, $\alpha^{[A]}_4$,
$\beta^{[A]}_3$, $\beta^{[A]}_5$, where $A$ stands for $P$, or $Q$.

With $A=P$, $Q$  we have the equality
\begin{equation}\label{R6inR4R5}
\mathcal{R}_6(x,y) = \gamma_1 \mathcal{R}_5^{[A]}(x,y) 
+ (x-\alpha^{[A]}_2 +\gamma_2) \mathcal{R}_4^{[A]}(x)
\equiv \textsf{S}_4^{[A]} x + \textsf{S}_6^{[A]},
\end{equation}
which introduces $\textsf{S}_4^{[A]}$, $\textsf{S}_6^{[A]}$ as
polynomials in  $\alpha^{[A]}_2$, $\alpha^{[A]}_4$,
$\beta^{[A]}_3$, $\beta^{[A]}_5$,
and coefficients $\gamma_k$ of $\mathcal{R}_6$.
Since $\mathcal{R}_6$ vanishes on $D_{2P}$ and $D_{2Q}$, we 
obtain four equations, 
which are linear in $\gamma_k$, and admit the matrix form
\begin{gather}\label{GammaInABEqs}
\begin{pmatrix}
\textsf{S}_6^{[P]} \\ \textsf{S}_4^{[P]} \\ \textsf{S}_6^{[Q]} \\ \textsf{S}_4^{[Q]} 
\end{pmatrix} \equiv
\begin{pmatrix}
1 & 0 & -\alpha_4^{[P]} & -\beta_5^{[P]} \\
0 & 1 & -\alpha_2^{[P]} & -\beta_3^{[P]} \\
1 & 0 & -\alpha_4^{[Q]} & -\beta_5^{[Q]} \\
0 & 1 & -\alpha_2^{[Q]} & -\beta_3^{[Q]}
\end{pmatrix}
\begin{pmatrix}
\gamma_6 \\ \gamma_4 \\ \gamma_2 \\ \gamma_1
\end{pmatrix}
+  \begin{pmatrix}
\alpha_2^{[P]} \alpha_4^{[P]} \\ (\alpha^{[P]}_2)^2 - \alpha^{[P]}_4 \\ 
\alpha_2^{[Q]} \alpha_4^{[Q]} \\ (\alpha^{[Q]}_2)^2 - \alpha^{[Q]}_4
\end{pmatrix}
= 0.
\end{gather}
Solving \eqref{GammaInABEqs}, we find
\begin{subequations}\label{gammaD2}
\begin{align}
 \begin{pmatrix} \gamma_2 \\ \gamma_1 \end{pmatrix} 
& =  \begin{pmatrix} 
\alpha^{[P]}_4 -  \alpha^{[Q]}_4 &  \beta^{[P]}_5 - \beta^{[Q]}_5 \\
\alpha^{[P]}_2 - \alpha^{[Q]}_2  & \beta^{[P]}_3 - \beta^{[Q]}_3 
 \end{pmatrix}^{-1}
 \begin{pmatrix} 
 \alpha^{[P]}_2 \alpha^{[P]}_4 - \alpha^{[Q]}_2 \alpha^{[Q]}_4\\
 (\alpha^{[P]}_2)^2 - (\alpha^{[Q]}_2)^2 - \alpha^{[P]}_4 + \alpha^{[Q]}_4
 \end{pmatrix},\\
 \begin{pmatrix} \gamma_6 \\ \gamma_4 \end{pmatrix} 
&=  \begin{pmatrix} \alpha_4^{[P]} & \beta_5^{[P]} \\
 \alpha_2^{[P]} & \beta_3^{[P]}  \end{pmatrix} 
 \begin{pmatrix} \gamma_2 \\ \gamma_1 \end{pmatrix} -
  \begin{pmatrix} \alpha^{[P]}_2 \alpha^{[P]}_4\\
  (\alpha^{[P]}_2)^2 - \alpha^{[P]}_4  \end{pmatrix}.
\end{align}
\end{subequations}
Formulas \eqref{gammaD2} define $\mathcal{R}_6$ in terms of 
$\alpha^{[A]}_2$, $\alpha^{[A]}_4$, $\beta^{[A]}_3$, $\beta^{[A]}_5$ with $A=P$, $Q$.

Let $D_2^\ast$ be a divisor of degree $2$  such that $(\mathcal{R}_6)_0 = D_{2P} + D_{2Q} + D_2^\ast$.
According to Remark~\ref{R:JIPr}, we define $D_2^\ast$  by the two polynomial functions
\begin{equation*}
\mathcal{R}_4^\ast(x) = x^2 + \alpha_2^\ast x + \alpha_4^\ast, \qquad
\mathcal{R}_5^\ast(x,y) = y + \beta_3^\ast x + \beta_5^\ast.
\end{equation*}

Recalling that $(\mathcal{R}_6)_0$ is defined by a system of the form \eqref{ZeroDivRn},
we have
\begin{equation}\label{HfEq}  
-\gamma_1^2 f(x,y_{\mathcal{R}_6}; \lambda) = 
\mathcal{R}^{[P]}_{4}(x) \mathcal{R}^{[Q]}_{4}(x) \mathcal{R}_4^\ast(x),
\end{equation}
where $y_{\mathcal{R}_6}$ is obtained from $\mathcal{R}_6(x,y_{\mathcal{R}_6})=0$, namely
$$ y_{\mathcal{R}_6} = {-}  \gamma_1^{-1} \big(x^3 + \gamma_2 x^2 + \gamma_4 x + \gamma_6\big).$$
Coordinates  $\alpha_2^\ast$, $\alpha_4^\ast$ are computed 
from  coefficients of $x^5$ and $x^4$ in \eqref{HfEq}.
Then $\mathcal{R}_{5}^\ast$ is derived from \eqref{R6inR4R5},
since $\mathcal{R}_{6}$  vanishes on $D_2^\ast$ as well, namely
\begin{equation}
\mathcal{R}_{5}^\ast(x,y) = \gamma_1^{-1}
\big( \mathcal{R}_6(x,y) - (x + \gamma_2 - \alpha_2^\ast) \mathcal{R}_4^\ast(x) \big).
\end{equation}
This produces coordinates $\beta_3^\ast$, $\beta_5^\ast$.

Finally, let  $\widetilde{D}_2 \equiv - D_2^\ast$, and so $\widetilde{D}_2 \sim D_{2P} + D_{2Q}$,
that is $\widetilde{D}_2$ is the reduced divisor equivalent to $D_{2P} + D_{2Q}$.
Then $\widetilde{D}_2$ is defined by the Mumford coordinates
\begin{subequations}\label{DivSumAlphaBeta}
\begin{align}
\begin{split}\label{DivSumAlpha}
\alpha_2^{[P+Q]} &= \alpha_2^\ast = - \alpha_2^{[P]} - \alpha_2^{[Q]} + 2\gamma_2 - \gamma_1^2, \\
\alpha_4^{[P+Q]} &= \alpha_4^\ast = - \alpha_4^{[P]} - \alpha_4^{[Q]} + \big(\alpha_2^{[P]}\big)^2  + \alpha_2^{[P]} \alpha_2^{[Q]} + \big(\alpha_2^{[Q]}\big)^2 \\
&\phantom{\alpha_4^{[P,Q]} = }
- \big(\alpha_2^{[P]} + \alpha_2^{[Q]}\big)\big(2\gamma_2 - \gamma_1^2 \big) 
 + 2 \gamma_4 + \gamma_2^2  - \lambda_2 \gamma_1^2,
\end{split}\\
\begin{split}\label{DivSumBeta}
\beta_3^{[P+Q]} &= -\beta_3^\ast = - \gamma_1^{-1} \big( 
  (\alpha_2^{[P+Q]})^2 - \alpha_4^{[P+Q]}  - \gamma_2 \alpha_2^{[P+Q]} + \gamma_4\big),\\
\beta_5^{[P+Q]} &= -\beta_5^\ast = - \gamma_1^{-1} \big(
 \alpha_2^{[P+Q]} \alpha_4^{[P+Q]} -  \gamma_2 \alpha_4^{[P+Q]} + \gamma_6 \big).
\end{split}
\end{align}
\end{subequations}

\subsection{Addition law on special divisors}
The case of adding a special divisor $D_{1Q} = Q_1 = (x^{[Q]}_1,y^{[Q]}_1)$ to a non-special divisors $D_{2P} = P_1+P_2$ 
can be obtained from the above formulas \eqref{DivSumAlphaBeta} by taking the limit as $Q_2 \to \infty$,
that is by applying the parametrization \eqref{param} to $(x^{[Q]}_2,y^{[Q]}_2)$
and taking the limit as $\xi \to 0$. As a result, the following formulas are obtained 
($x^{[Q]}_1 \equiv x_Q$, $y^{[Q]}_1 \equiv y_Q$):
\begin{equation}\label{DgPABdef}
\begin{split}
&\alpha_2^{[P+Q]} = - \alpha_2^{[P]}  + x_Q + \lambda_2 - \gamma_1^2,\\
&\alpha_4^{[P+Q]} = - \alpha_4^{[P]}   + (\alpha_2^{[P]} )^2 + 
(x_Q-\alpha_2^{[P]} ) \big(x_Q + \lambda_2 - \gamma_1^2\big)
+ \lambda_4 - 2 \gamma_1 \gamma_3,\\
&\beta_3^{[P+Q]} =  \gamma_1 \alpha_2^{[P+Q]}  - \gamma_3, \\
&\beta_5^{[P+Q]} = \gamma_1 \alpha_4^{[P+Q]}  - \gamma_5, \\
\end{split}
\end{equation}
where
\begin{align*}
&\gamma_1 = - \frac{y_Q + x_Q \beta_3^{[P]} + \beta_5^{[P]}}
{x_Q^2 + x_Q \alpha_2^{[P]} + \alpha_4^{[P]}},\\
&\gamma_3 = \frac{- y_Q \alpha_2^{[P]} + x_Q^2 \beta_3^{[P]} + \alpha_4^{[P]} \beta_3^{[P]}  
- \alpha_2^{[P]} \beta_5^{[P]}}
{x_Q^2 + x_Q \alpha_2^{[P]} + \alpha_4^{[P]}},\\
&\gamma_5 = \frac{- y_Q \alpha_4^{[P]} + x_Q^2 \beta_5^{[P]} - x_Q (\alpha_4^{[P]} \beta_3^{[P]}  
- \alpha_2^{[P]} \beta_5^{[P]}) }
{x_Q^2 + x_Q \alpha_2^{[P]} + \alpha_4^{[P]}}.
\end{align*}

After taking the limit of \eqref{DgPABdef} as  $P_2 \to \infty$ in a similar way, we obtain 
  formulas for the Mumford coordinates of the sum of 
two special divisors: $D_{1P} = P_1 = (x_P,y_P) $ and $D_{1Q} = Q_1=(x_Q,y_Q)$,
which, evidently, coincide with \eqref{AlphaBeta}:
\begin{align*}
&\alpha_2^{[P+Q]} =  - x_P - x_Q,&  
&\alpha_4^{[P+Q]} =  x_P x_Q,&\\
&\beta_3^{[P+Q]} = - \frac{y_P - y_Q}{x_P-x_Q},&
&\beta_5^{[P+Q]} = \frac{x_Q y_P - x_P y_Q}{x_P-x_Q}.&
\end{align*}

\subsection{Duplication law}
We are also interested in duplication: $D_4 = 2 D_2$, when 
$D_{2P} = D_{2Q} = D_2$.
Let $D_2 = (x_1,y_1) + (x_2,y_2)$, and $\alpha_2$, $\alpha_4$, $\beta_3$, $\beta_5$
be defined by \eqref{AlphaBeta}. In this case, the system
of equations for $\gamma_k$ has the form
\begin{gather}\label{SEqsDupl}
\textsf{S}_6 = 0,\quad \textsf{S}_4 = 0,\quad
 \rmd_{x_1,x_2}\textsf{S}_6 = 0, \quad \rmd_{x_1,x_2} \textsf{S}_4 = 0,
\end{gather}
where $\displaystyle \rmd_{x_1,x_2} = \frac{\rmd}{\rmd x_1} + \frac{\rmd}{\rmd x_2}$.  
We denote
\begin{gather}\label{ABDupl}
\begin{split}
&\alpha'_2 = \rmd_{x_1,x_2} \alpha_2 = -2,\qquad\qquad\quad
\alpha'_4 = \rmd_{x_1,x_2} \alpha_4 = x_1 +x_2,\\
&\beta'_3 = \rmd_{x_1,x_2} \beta_3 = - \frac{y'_1 - y'_2}{x_1-x_2},\qquad
\beta'_5 =  \rmd_{x_1,x_2} \beta_5  = \frac{y'_1 x_2 - y'_2 x_1}{x_1-x_2} + \frac{y _1 - y_2}{x_1-x_2},
\end{split}
\end{gather}
where $y'_i$, $i=1$, $2$, are computed by
$$
y_i' = \lim_{(x,y) \to (x_i,y_i)} \frac{-\partial_x f(x,y;\lambda)}{\partial_y f(x,y;\lambda)} 
= \frac{ \mathcal{P}'(x_i)}{2y_i}.
$$
By taking into account that $\alpha'_4 = - \alpha_2$, and 
$\beta'_5 = \frac{y'_1 x_2 - y'_2 x_1}{x_1-x_2} - \beta_3$,  the system of equations
 \eqref{SEqsDupl} is reduced to the following matrix form
\begin{gather}\label{GammaInABEqsDuplSmpl}
\begin{pmatrix}
1 & 0 & -\alpha_4 & -\beta_5 \\
0 & 1 & -\alpha_2 & -\beta_3 \\
0 & 1 & 0 & -\beta'_5 -\beta_3  \\
0 & 0 & 2 & -\beta'_3
\end{pmatrix}
\begin{pmatrix}
\gamma_6 \\ \gamma_4 \\ \gamma_2 \\ \gamma_1
\end{pmatrix}
+  \begin{pmatrix}
\alpha_2 \alpha_4 \\ \alpha_2^2 - \alpha_4 \\ 
- 3 \alpha_4  \\ 
- 3 \alpha_2
\end{pmatrix}
= 0.
\end{gather}
A solution of \eqref{GammaInABEqsDuplSmpl} is given by
\begin{subequations}\label{gammaD2Dupl}
\begin{align}
 \begin{pmatrix} \gamma_2 \\ \gamma_1 \end{pmatrix} 
& =  \frac{1}{2 \beta'_5 - \alpha_2 \beta'_3}
\begin{pmatrix}
3 \alpha_2 \beta'_5 -  \big(\alpha_2^2 + 2 \alpha_4 \big) \beta'_3 \\ 
\alpha_2^2  - 4\alpha_4 
\end{pmatrix} ,\\
 \begin{pmatrix} \gamma_6 \\ \gamma_4 \end{pmatrix} 
&=  \begin{pmatrix} \alpha_4 & \beta_5 \\
 \alpha_2 & \beta_3 \end{pmatrix} 
 \begin{pmatrix} \gamma_2 \\ \gamma_1 \end{pmatrix} -
  \begin{pmatrix} \alpha_2 \alpha_4\\
  \alpha_2^2 - \alpha_4  \end{pmatrix},
\end{align}
\end{subequations}
or in terms of coordinates
\begin{align}\label{gammaD2DuplXY}
 \begin{pmatrix} \gamma_6 \\ \gamma_4 \\ \gamma_2 \\ \gamma_1 \end{pmatrix} 
& =  \begin{pmatrix} -\tfrac{1}{2} x_1x_2 (x_1+x_2) \\ 3 x_1x_2 \\ -\tfrac{3}{2}(x_1+x_2) \\ 0 \end{pmatrix} 
+ \frac{(x_1-x_2)^2}{(y'_1 + y'_2)(x_1-x_2) - 2(y_1-y_2)} \times \\
&\phantom{mmmmmmmmmm} \times   \begin{pmatrix}
- \tfrac{1}{2} x_1 x_2 (y'_1 - y'_2) + x_2 y_1 - x_1 y_2 \\
 - (x_2 y'_1 - x_1 y'_2) \\
\tfrac{1}{2} (y'_1 - y'_2) \\ 
- (x_1-x_2)
\end{pmatrix}. \notag
\end{align}

Finally, the reduced divisor $\widetilde{D}_{2} \sim 2D_{2Q}$
is defined by
\begin{subequations}\label{DivSumAlphaBetaDupl}
\begin{align}
\begin{split}\label{DivSumAlphaDupl}
\alpha_2^{[2Q]} &= - 2 \alpha_2^{[Q]} + 2\gamma_2 - \gamma_1^2, \\
\alpha_4^{[2Q]} &= - 2 \alpha_4^{[Q]} + 3\big(\alpha_2^{[Q]}\big)^2  
- 2 \alpha_2^{[Q]} \big(2\gamma_2 - \gamma_1^2 \big) 
 + 2 \gamma_4 + \gamma_2^2  - \lambda_2 \gamma_1^2,
\end{split}\\
\begin{split}\label{DivSumBetaDupl}
\beta_3^{[2Q]} &= - \gamma_1^{-1} \big( 
  (\alpha_2^{[2Q]})^2 - \alpha_4^{[2Q]}  - \gamma_2 \alpha_2^{[2Q]} + \gamma_4\big),\\
\beta_5^{[2Q]} &=  - \gamma_1^{-1} \big(
 \alpha_2^{[2Q]} \alpha_4^{[2Q]} -  \gamma_2 \alpha_4^{[2Q]} + \gamma_6 \big),
\end{split}
\end{align}
\end{subequations}
where $\alpha_2^{[Q]} \equiv \alpha_2$, $\alpha_4^{[Q]} \equiv \alpha_4$, 
$\beta_3^{[Q]} \equiv \beta_3$, $\beta_5^{[Q]} \equiv \beta_5$.

\subsection{The case of $D_4 \sim D_1$ }
The case of $D_{2P} \,{+}\, D_{2Q} \,{\sim}\, D_1$, when the result of addition
is a special divisor $D_1 = (x_{P+Q},y_{P+Q})$, deserves special consideration.
In this case, addition is implemented through a polynomial function of weight $5$ of the form
$$\widetilde{\mathcal{R}}_5(x,y) =  y + \gamma_1 x^2 + \gamma_3 x + \gamma_5,$$
whose divisor of zeros is composed of $D_{2P}$,  $D_{2Q}$, and $D_1^\ast$ such that $D_1^\ast = -D_1$. 

With $A=P$, $Q$  we have the equality
\begin{equation*}
\widetilde{\mathcal{R}}_5(x,y) = \mathcal{R}_5^{[A]}(x,y) 
+ \gamma_1 \mathcal{R}_4^{[A]}(x)
\equiv \textsf{S}_3^{[A]} x + \textsf{S}_5^{[A]},
\end{equation*}
where
\begin{equation*}
\textsf{S}_3^{[A]} = \gamma_3 - \alpha^{[A]}_2 \gamma_1 -\beta^{[A]}_3,\qquad
\textsf{S}_5^{[A]} = \gamma_5 - \alpha^{[A]}_4 \gamma_1 -\beta^{[A]}_5.
\end{equation*}
The overdetermined system 
\begin{equation}\label{SEqs}
\textsf{S}_3^{[P]}=0,\qquad \textsf{S}_5^{[P]}=0,\qquad
\textsf{S}_3^{[Q]}=0,\qquad \textsf{S}_5^{[Q]}=0
\end{equation}
is consistent, and produces the condition
\begin{equation}
\gamma_1 = - \frac{\beta_3^{[P]}-\beta_3^{[Q]}}{\alpha_2^{[P]} - \alpha_2^{[Q]}} 
= - \frac{\beta_5^{[P]}-\beta_5^{[Q]}}{\alpha_4^{[P]} - \alpha_4^{[Q]}} ,
\end{equation}
which singles out pairs of divisors $D_{2P}$,  $D_{2Q}$ whose sum is equivalent to
a special divisor $D_1$. This condition causes vanishing the denominator in
\eqref{gammaD2}.
From the system \eqref{SEqs} we also find 
\begin{gather}
\gamma_3 = -\frac{\alpha_2^{[Q]}  \beta_3^{[P]} - \alpha_2^{[P]}  \beta_3^{[Q]}}{\alpha_2^{[P]} - \alpha_2^{[Q]}} ,\qquad
\gamma_5 = \beta_5^{[P]} - \alpha_4^{[P]}  \frac{\beta_3^{[P]}-\beta_3^{[Q]}}{\alpha_2^{[P]} - \alpha_2^{[Q]}}.
\end{gather}

Finally, with the help of the equality
$ y_{\widetilde{\mathcal{R}}_5} \,{=}\, {-}  \big( \gamma_1 x^2 + \gamma_3 x + \gamma_5\big)$
used in
\begin{equation*}
 f(x,y_{\widetilde{\mathcal{R}}_5}; \lambda) = 
\mathcal{R}^{[P]}_{4}(x) \mathcal{R}^{[Q]}_{4}(x) \big(x-x_{P+Q}\big),
\end{equation*}
we obtain
\begin{gather}
\begin{split}
&x_{P+Q} = \alpha_2^{[P]} + \alpha_2^{[Q]} + \gamma_1^2 - \lambda_2,\\
&y_{P+Q} = \gamma_1 x_{P+Q}^2 + \gamma_3 x_{P+Q}  + \gamma_5.
\end{split}
\end{gather}

\bigskip
In the case of duplication $2D_{2Q} \sim D_1$, 
which leads to a special divisor $D_1 = (x_{2Q},y_{2Q})$, 
we replace the system \eqref{SEqs} with
\begin{equation}\label{SEqsSpec}
\textsf{S}_3^{[Q]}=0,\qquad \textsf{S}_5^{[Q]}=0,\qquad
 \rmd_{x_1,x_2}\textsf{S}_3^{[Q]}=0,\qquad  \rmd_{x_1,x_2}\textsf{S}_5^{[Q]}=0,
\end{equation}
and obtain the condition
\begin{equation}
2 \beta'_5{}^{[Q]} = \alpha_2{}^{[Q]} \beta'_3{}^{[Q]} ,\qquad \text{or}\qquad
\frac{1}{2} (y'_1 + y'_2) = \frac{y_1-y_2}{x_1-x_2},
\end{equation}
which singles out such $D_{2Q}$ that  $2D_{2Q} \sim D_1$. 
Then we find expressions for $\gamma_k$:
\begin{gather}\label{GammaDuplSpec}
\gamma_1 = \tfrac{1}{2} \beta'_3{}^{[Q]},\quad
\gamma_3 = \tfrac{1}{2} \big( 2 \beta_3^{[Q]} + \alpha_2^{[Q]}  \beta'_3{}^{[Q]} \big),\quad
\gamma_5 = \tfrac{1}{2} \big( 2 \beta_5^{[Q]} + \alpha_4^{[Q]}  \beta'_3{}^{[Q]} \big).
\end{gather}

Finally, we find coordinates of the resulting divisor $D_1$:
\begin{gather}
\begin{split}
&x_{2Q} = 2 \alpha_2^{[Q]} + \gamma_1^2 - \lambda_2,\\
&y_{2Q} = \gamma_1 x_{2Q}^2 + \gamma_3 x_{2Q}  + \gamma_5.
\end{split}
\end{gather}

\section{Torsion divisors}
Let  $D \in \mathcal{C}^2$ be a reduced divisor, special or non-special. 
We say that $D$ is an \emph{$n$-torsion} divisor, if $n D \sim O$, where $O = 2\infty$
denotes the neutral divisor, and $D$ 
generates a cyclic group of order $n$:
$C_n = \langle O, D \rangle$. 
This definition implies the following criterion.
\begin{theo}\label{T:TorsionPs}
A  divisor $D \in \mathcal{C}^2$ is $n$-torsion when the following holds
\begin{enumerate}
\item[(i)]  $(k+1) D \sim - k D$, if $n = 2k+1$; or
\item[(ii)]  $k D \sim - k D$,  if $n=2k$.
\end{enumerate}
\end{theo}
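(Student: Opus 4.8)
The plan is to derive the criterion directly from the definition of an $n$-torsion divisor, namely $nD \sim O$, by splitting the relation $nD \sim O$ according to the parity of $n$ and using the inversion law together with the group structure on $\Jac(\mathcal{C})$. The key observation is that $\sim$ is an equivalence of divisor classes, so we may freely move terms across the relation by adding inverses, exactly as in an abelian group; the inverse $-kD$ is realized concretely on the curve by the inversion construction of the earlier theorems (the complement of $kD$ in the divisor of zeros of a suitable $\mathcal{R}_{\mathfrak{w}}$).

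First I would treat the odd case $n = 2k+1$. Starting from $nD \sim O$, write $nD = (k+1)D + kD$, so that $(k+1)D + kD \sim O$. Since $O$ is the neutral class, adding the inverse class $-kD$ to both sides yields $(k+1)D \sim -kD$, which is exactly condition (i). Conversely, if $(k+1)D \sim -kD$, then adding $kD$ to both sides gives $(2k+1)D \sim O$, i.e.\ $nD \sim O$. For the even case $n = 2k$, I would write $nD = kD + kD$, so $kD + kD \sim O$; adding $-kD$ to both sides gives $kD \sim -kD$, which is condition (ii), and the reverse implication is obtained by adding $kD$ back. Thus each condition is logically equivalent to $nD \sim O$.

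The one genuinely nontrivial point, which I expect to be the main obstacle, is the requirement in the definition that $D$ \emph{generate} a cyclic group of order exactly $n$, rather than merely satisfying $nD \sim O$ (which would only force the order of $D$ to divide $n$). The criterion as stated is a necessary condition extracted from $nD \sim O$, so for the forward direction the implication is immediate; the care is needed in phrasing the statement so that (i) and (ii) are understood as consequences of $n$-torsion, and in confirming that the manipulations respect the distinction between the class relation $\sim$ and equality of divisors. I would make explicit that the splitting $nD = (k+1)D + kD$ and $nD = 2\cdot kD$ are equalities of effective divisors on $\mathcal{C}^n$, while the passage to $(k+1)D \sim -kD$ and $kD \sim -kD$ takes place at the level of classes in $\Jac(\mathcal{C})$, where subtraction is legitimate.

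Finally, I would remark on why this reformulation is the useful one for computation: conditions (i) and (ii) replace the single high-degree relation $nD \sim O$ by a comparison of two divisors of roughly half the complexity, $(k+1)D$ against $-kD$ (resp.\ $kD$ against $-kD$). Since inversion is realized by passing from $\mathcal{R}_{\mathfrak{w}}(x,y)$ to $\mathcal{R}_{\mathfrak{w}}(x,-y)$, the class $-kD$ is obtained from $kD$ at essentially no extra cost, so the criterion reduces the torsion test to matching the Mumford coordinates of two computable divisors built by the addition and duplication laws of Section~4. This is precisely the structure that the subsequent derivation of the division polynomials will exploit.
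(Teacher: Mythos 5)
Your proof is correct and coincides with the paper's reasoning: the paper in fact gives no explicit proof, introducing the theorem with the phrase ``This definition implies the following criterion,'' and the implicit argument is precisely the group-theoretic rearrangement you spell out, namely $(k+1)D + kD \sim O \Leftrightarrow (k+1)D \sim -kD$ for $n=2k+1$, and $kD + kD \sim O \Leftrightarrow kD \sim -kD$ for $n=2k$, carried out in the divisor class group. Your additional observation that conditions (i)/(ii) by themselves only force the order of $D$ to divide $n$, so that exactness of the order is a separate stipulation of the definition, is a correct refinement of a point the paper leaves unaddressed.
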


\begin{rem}\label{R:nTorsDivs}
$n$-Torsion divisors are the Abel pre-images of points of order $n$ on $\Jac(\mathcal{C})$, 
that is, of $u[\varepsilon] \in \Jac(\mathcal{C})$ computed by \eqref{uChar}
from characteristics $[\varepsilon]$ of order $n$. of such $u[\varepsilon]$.
An example of computations of $2$-, $3$- and $4$-torsion divisors on a genus four curve
is implemented in Wolfram Mathematica 12, and can be found at
 \texttt{https://community.wolfram.com/groups/-/m/t/3314103}.
 This example, as well as computations
 on a genus two curve, shows that $n$-torsion divisors are non-special, except the case of $n=2$.
\end{rem}

Condition (i) in Theorem~\ref{T:TorsionPs}  implies 
\begin{gather}\label{2k1TorsCrit}
\alpha_2^{[(k\,{+}\,1)D]} =  \alpha_2^{[k D]},\qquad \ \ 
\alpha_4^{[(k\,{+}\,1)D]} =  \alpha_4^{[k D]}.
\end{gather}
Relations for $\beta$-coordinates in this case are trivial.

Condition (ii)   implies that $k D$ is $2$-torsion, that is
\begin{gather}\label{2kTorsCrit}
\beta_3^{[k D]} = - \beta_3^{[k D]}  =0,\qquad
\beta_5^{[k D]}  = -\beta_5^{[k D]}  =0,
\end{gather}
if $k D$ is non-special. If $k D$ is special, then $k D \sim (x_{k D},y_{k D})$,
and instead of \eqref{2kTorsCrit} we have
\begin{gather}\label{2kTorsCritSpec}
y_{k D} = -y_{k D}  =0. \tag{\ref{2kTorsCrit}'}
\end{gather}
Relations for $\alpha$-coordi\-na\-tes in case (ii) are trivial.
Equations \eqref{2k1TorsCrit} and \eqref{2kTorsCrit} written 
in terms of $\alpha_2^{[D]}$, $\alpha_4^{[D]}$, $\beta_3^{[D]}$, $\beta_5^{[D]}$
can be considered as \emph{division polynomials} in Mumford coordinates.

\subsection{$2$-Torsion divisors}
$2$-Torsion divisors are defined by the condition
$y_i = 0$, which implies that $x_i$ are $x$-coordinates of branch points.
That is, $2$-torsion divisors are Abel pre-images of half-periods on $\Jac(\mathcal{C})$.
Among fifteen $2$-torsion divisors, ten are composed of two finite branch points,
and correspond to even characteristics. The remaining five $2$-torsion divisors 
are composed of infinity and  one finite branch point,
and correspond to odd characteristics.

\subsection{$3$-Torsion divisors}
Let $D_{2Q} = (x_1,y_1)+(x_2,y_2)$ be a $3$-torsion divisor, which means that $2 D_{2Q} \sim - D_{2Q}$.
Note, that all  $3$-torsion divisors are non-special. 
From \eqref{2k1TorsCrit} we find
 the criterion
\begin{gather}\label{3TorsCrit}
\alpha_2^{[2Q]} =  \alpha_2^{[Q]},\qquad 
\alpha_4^{[2Q]} =  \alpha_4^{[Q]}.
\end{gather}
Applying the duplication law \eqref{DivSumAlphaBetaDupl}, we obtain
the equations which define $3$-torsion divisors in terms of Mumford coordinates
($\alpha_2^{[Q]} \equiv \alpha_2$,  $\alpha_4^{[Q]} \equiv \alpha_4$):
\begin{gather}\label{Alpha3Tors} 
\begin{split}
&3\alpha_2 = 2\gamma_2 - \gamma_1^2, \\
&3 \alpha_4 =  3 \alpha_2^2 - 2 \alpha_2 \big(2\gamma_2 - \gamma_1^2\big)
  + 2 \gamma_4 + \gamma_2^2  - \lambda_2 \gamma_1^2,
\end{split}
\end{gather}
where parameters $\gamma_k$ are defined by \eqref{gammaD2DuplXY}.

Therefore, all pairs of points $(x_1,y_1)$, $(x_2,y_2)$ which form $3$-torsion divisors 
can be obtained from \eqref{Alpha3Tors}, 
subject to $f(x_1,y_1;\lambda)=0$, $f(x_2,y_2;\lambda)=0$.
Substituting \eqref{gammaD2DuplXY} and \eqref{AlphaBeta}, we rewrite 
these equations in terms of coordinates:
\begin{align*}
&  (y'_1 - y'_2)\Big((y'_1 + y'_2)(x_1-x_2) - 2(y_1-y_2)\Big) - (x_1-x_2)^4 = 0,\\
& {-}\big( x_2 (y'_1)^2 - x_1 (y'_2)^2 \big) (x_1-x_2)  
+ 2 \big(x_1 y'_1 - x_2 y'_2 \big) (y_1-y_2) \\
&\phantom{mmmmmm} 
-3 (y_1-y_2)^2 - (2x_1+2x_2 + \lambda_3) (x_1-x_2)^4=0, \notag
\end{align*}
or
\begin{subequations}\label{Tors3Eqs}
\begin{align}
&\mathcal{Y}(x_1,y_1;x_2,y_2)
\equiv y_1 y_2 \big(\mathcal{P}'(x_1) \mathcal{P}(x_2) + \mathcal{P}'(x_2) \mathcal{P}(x_1)\big) \label{Tors3Eq1} \\
&\quad + \tfrac{1}{4} (x_1-x_2) \big(\mathcal{P}'(x_1)^2 \mathcal{P}(x_2) 
- \mathcal{P}'(x_2)^2 \mathcal{P}(x_1)\big) \notag \\
&\quad - \mathcal{P}(x_1)\mathcal{P}(x_2) \big(\mathcal{P}'(x_1) + \mathcal{P}'(x_2) + (x_1-x_2)^4 \big) = 0, \notag\\
&y_1 y_2 \big( 6  \mathcal{P}(x_1)  \mathcal{P}(x_2)
- x_1 \mathcal{P}'(x_1)  \mathcal{P}(x_2)  - x_2 \mathcal{P}'(x_2) \mathcal{P}(x_1) \big)  \label{Tors3Eq2} \\
&\quad - \tfrac{1}{4} (x_1-x_2) \big(x_2 \mathcal{P}'(x_1)^2 \mathcal{P}(x_2) 
- x_1 \mathcal{P}'(x_2)^2 \mathcal{P}(x_1)\big) \notag \\
&\quad + \mathcal{P}(x_1)\mathcal{P}(x_2) \big( x_1 \mathcal{P}'(x_1) - 3 \mathcal{P}(x_1)
+ x_2 \mathcal{P}'(x_2)  - 3 \mathcal{P}(x_2)  \notag\\
&\qquad  - (2 x_1+2 x_2 + \lambda_2) (x_1-x_2)^4 \big) = 0. \notag
\end{align}
\end{subequations}
Finally, we eliminate $y_1 y_2$ from these two equations,
and cancel the common factor $(x_1-x_2)^4$, namely
\begin{multline}\label{Tors3EqX}
\mathcal{X}(x_1,x_2) \equiv - \frac{1}{4}  \big(\mathcal{P}(x_2)^2 \mathcal{P}'(x_1) \mathcal{T}(x_1)^2 
+ \mathcal{P}(x_1)^2 \mathcal{P}'(x_2)  \mathcal{T}(x_2)^2 \big)  \\
+ \frac{1}{2} \frac{\mathcal{P}(x_2)^3  \mathcal{T}(x_1)^2 - \mathcal{P}(x_1)^3  \mathcal{T}(x_2)^2}{(x_1-x_2)} 
-\frac{1}{4} \bigg( \frac{\mathcal{P}(x_1)-\mathcal{P}(x_2)}{x_1-x_2} \bigg)^5  \\
+ \frac{3}{4} \frac{\mathcal{P}(x_2)^2  \mathcal{T}(x_1) + \mathcal{P}(x_1)^2  \mathcal{T}(x_2)}{(x_1-x_2)} 
\bigg( \frac{\mathcal{P}(x_1)-\mathcal{P}(x_2)}{x_1-x_2} \bigg)^2  \\
- \mathcal{P}(x_1)  \mathcal{P}(x_2) 
\frac{\mathcal{P}(x_2)  \mathcal{P}'(x_1) - \mathcal{P}(x_1)  \mathcal{P}'(x_2) }{x_1-x_2}
\bigg(\frac{\mathcal{T}(x_1)+\mathcal{T}(x_2) }{x_1-x_2}\bigg) \\
+ \mathcal{P}(x_1)  \mathcal{P}(x_2) \Big( {-} 6  \mathcal{P}(x_1)  \mathcal{P}(x_2)
+ x_1 \mathcal{P}(x_2)  \mathcal{P}'(x_1) + x_2  \mathcal{P}(x_1) \mathcal{P}'(x_2) \\
+ \big(\mathcal{P}(x_2)  \mathcal{P}'(x_1) + \mathcal{P}(x_1)  \mathcal{P}'(x_2) \big) (2x_1+2x_2 + \lambda_2)\Big) = 0,
\end{multline}
where 
\begin{gather*}
\begin{split}
\mathcal{T}(x_i) &= \frac{\mathcal{P}(x_1) - \mathcal{P}(x_3) - \mathcal{P}'(x_i)(x_1-x_3)}{(x_1-x_3)^2}\\
& = \frac{1}{x_1-x_2} \Big( x_1^4 + x_1^3 x_2 + x_1^2 x_2^2 + x_1 x_3^3 + x_2^4 - 5 x_i^4 \\
&\quad + \lambda_2 \big(x_1^3 + x_1^2 x_2 + x_1 x_2^2 + x_2^3 - 4 x_i^3\big) \\
&\quad + \lambda_4 \big(x_1^2 + x_1 x_2 + x_3^2 - 3 x_i^2\big)
+ \lambda_6 (x_1+x_2 - 2x_i ) \Big),
\end{split}
\end{gather*}
which is a polynomial, as follows from the series expansion of $\mathcal{P}$ about $x_i$.
By direct computations, one can verify that the following functions are polynomials
\begin{gather*}
\frac{\mathcal{T}(x_1)+\mathcal{T}(x_2) }{x_1-x_2},\quad 
\frac{\mathcal{P}(x_2)^2 \mathcal{T}(x_1) + \mathcal{P}(x_1)^2 \mathcal{T}(x_2) }{x_1-x_2},\quad
\frac{\mathcal{P}(x_2)^3 \mathcal{T}(x_1)^2 - \mathcal{P}(x_1)^3 \mathcal{T}(x_2)^2 }{x_1-x_2}.
\end{gather*}

The polynomial $\mathcal{X}$ in \eqref{Tors3EqX} has weight $40$, and
vanishes on $40$ pairs $\{x_1$, $x_2\}$.
In the system \eqref{Tors3Eqs} we  replace \eqref{Tors3Eq2} with  \eqref{Tors3EqX}, 

\begin{theo}
The polynomials $\mathcal{X}$ and $\mathcal{Y}$ defined by  \eqref{Tors3EqX} and \eqref{Tors3Eq1} 
on a curve $\mathcal{C}$ defined by~\eqref{C25Eq} single out the collection of  $3$-torsion divisors, 
which are Abel pre-images of $u[\varepsilon] \in \Jac(\mathcal{C})$ 
with characteristics $[\varepsilon]$ of order $3$.
\end{theo}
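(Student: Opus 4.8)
The plan is to turn the chain of equivalences already assembled in this section into a two-sided identification and to close it with a dimension count that matches the number of order-three points on $\Jac(\mathcal{C})$. First I would record that, by Theorem~\ref{T:TorsionPs}(i) with $n=3$, a reduced divisor $D$ is $3$-torsion exactly when $2D\sim -D$. Since reduced representatives are unique (Section~2), and since the inverse $-D$ of a non-special $D_{2Q}=(x_1,y_1)+(x_2,y_2)$ is $(x_1,-y_1)+(x_2,-y_2)$ by the hyperelliptic involution (cf.\ Corollary~\ref{C:Rwl2g}), the relation $2D\sim -D$ forces the reduced divisors $2D$ and $-D$ to coincide. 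As $-D$ carries the same $x$-coordinates as $D$, the $\alpha$-coordinates agree, which is precisely \eqref{3TorsCrit}; the companion relations $\beta^{[2D]}=-\beta^{[D]}$ are then automatic, because on $\Jac(\mathcal{C})\backslash(\sigma)_0$ the pair $(\alpha_2,\alpha_4)$ determines $(\beta_3,\beta_5)$ up to the simultaneous sign flip cut out by the algebraic model \eqref{ABJacEqs}. I would also remark that for non-special $D$ the divisor $2D$ is itself non-special (else $-D$, hence $D$, would be special), so only the generic duplication law \eqref{DivSumAlphaBetaDupl} is needed and the special branch of Section~4 contributes nothing.

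Next I would substitute \eqref{DivSumAlphaBetaDupl} into \eqref{3TorsCrit} to obtain the two relations \eqref{Alpha3Tors} in Mumford coordinates, and then insert the explicit $\gamma_k$ of \eqref{gammaD2DuplXY} together with \eqref{AlphaBeta}. Clearing denominators — which are powers of $(x_1-x_2)$ and of the duplication denominator $(y_1'+y_2')(x_1-x_2)-2(y_1-y_2)$ — and reducing each $y_i^2$ to $\mathcal{P}(x_i)$ via \eqref{C25Eq}, the two relations become the polynomial equations \eqref{Tors3Eqs}. The structural observation to verify here is that, after this reduction, each of \eqref{Tors3Eq1} and \eqref{Tors3Eq2} is affine-linear in the single odd quantity $y_1y_2$, every other term being a polynomial in $x_1,x_2$; this is what makes the elimination transparent.

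I would then eliminate $y_1y_2$ linearly between \eqref{Tors3Eq1} and \eqref{Tors3Eq2}, producing a relation purely in $x_1,x_2$, and factor out the extraneous $(x_1-x_2)^4$, which is introduced by the duplication denominator and corresponds to the excluded degenerate locus $x_1=x_2$ (no $3$-torsion divisor has repeated points, by Remark~\ref{R:nTorsDivs}); the reduced equation is $\mathcal{X}=0$ of \eqref{Tors3EqX}. On the locus $x_1\neq x_2$ the system $\{\mathcal{X}=0,\ \mathcal{Y}=0\}$ is equivalent to the original one: $\mathcal{Y}=0$ fixes the sign of the product $y_1y_2$, thereby selecting the correct involution orbit among the sign choices $(\pm y_1,\pm y_2)$, while $\mathcal{X}=0$, being polynomial in $x_1,x_2$ alone, pins down the admissible root pairs. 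Conversely, any common zero of $\mathcal{X}$ and $\mathcal{Y}$ lying on $\mathcal{C}$ reconstructs, through the Jacobi inversion problem of Proposition~\ref{P:JIPr}, a non-special divisor $D$ with $2D\sim -D$, that is a $3$-torsion divisor, whose Abel image $u$ satisfies $3u\equiv 0$ and is realised as some $u[\varepsilon]$ of \eqref{uChar} with characteristic of order $3$.

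Finally I would close the argument with a count. Since $\Jac(\mathcal{C})[3]\cong(\Integer/3\Integer)^4$ has $3^4=81$ elements and, by Remark~\ref{R:nTorsDivs}, none of the $80$ nonzero points lies on $(\sigma)_0$, each nonzero order-three point yields a genuine non-special degree-two divisor; grouping these by the involution $u\leftrightarrow -u$, which fixes the unordered pair $\{x_1,x_2\}$, leaves exactly $40$ pairs, in agreement with $\wgt\mathcal{X}=40$. This matching shows that $\mathcal{X}$ and $\mathcal{Y}$ neither lose nor introduce solutions, completing the identification. The main obstacle is this sharpness claim: one must confirm that the elimination extracts \emph{precisely} the factor $(x_1-x_2)^4$ and no further parasitic component, and that the map from nonzero order-three points to root pairs of $\mathcal{X}$ is exactly two-to-one; the weight-$40$ count is the decisive check that simultaneously forecloses the loss of genuine solutions and the survival of spurious ones.
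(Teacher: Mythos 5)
Your derivation half matches the paper exactly: the paper's own proof of this theorem is precisely the computation preceding it --- criterion (i) of Theorem~\ref{T:TorsionPs} with $k=1$ gives \eqref{3TorsCrit}, the duplication law \eqref{DivSumAlphaBetaDupl} with \eqref{gammaD2DuplXY} and \eqref{AlphaBeta} turns it into \eqref{Alpha3Tors} and then \eqref{Tors3Eqs}, and linear elimination of $y_1y_2$ followed by cancellation of $(x_1-x_2)^4$ yields \eqref{Tors3EqX}; the paper also closes with the same numerology ($\wgt\mathcal{X}=40$ versus $80/2=40$ pairs $\{u,-u\}$ of order-$3$ points). So the architecture of your argument is the paper's.

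However, the two claims you lean on for sufficiency are not correct as stated. First, it is false that \eqref{ABJacEqs} determines $(\beta_3,\beta_5)$ from $(\alpha_2,\alpha_4)$ up to a single simultaneous sign flip: for fixed $\{x_1,x_2\}$ all four sign choices $(\pm y_1,\pm y_2)$ satisfy \eqref{ABJacEqs}, producing generically \emph{two} distinct flip-orbits, $\pm(\beta_3,\beta_5)$ for $D$ and $\pm(\beta_3',\beta_5')$ for $D'=(x_1,y_1)+(x_2,-y_2)$. Consequently, equality of $\alpha$-coordinates only forces the reduced divisor $2D$ to lie in $\{D,-D,D',-D'\}$; the case $2D=D$ is impossible, but ruling out $2D\sim\pm D'$ (equivalently $u_1=-3u_2$ or $u_2=-3u_1$ with both points on the Abel image of $\mathcal{C}$) is the genuine content of sufficiency, and neither your sign-flip claim nor the paper's remark that the $\beta$-relations are ``trivial'' supplies it. It can be supplied, e.g., by intersection theory: $\Theta\cdot[-3]^*\Theta = 9\,\Theta^2 = 18$, and each of the six $2$-torsion points of $\Theta$ absorbs local multiplicity $3$ (the local branch of $\Theta$ at such a point admits an odd parametrization, so $\phi(3p)-3\phi(p)$ vanishes to order $3$), whence $6\times 3=18$ exhausts the intersection and the locus $u_1=-3u_2$ meets $\mathcal{C}\times\mathcal{C}$ only in degenerate configurations --- but no argument of this kind appears in your proposal. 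Second, your ``decisive check'' is circular: a single polynomial $\mathcal{X}(x_1,x_2)$ of weight $40$ has a one-dimensional zero set, so ``weight $40$ implies exactly $40$ root pairs'' is not a valid inference; the count of $40$ pairs only has meaning for the full system $\{\mathcal{X}=0,\ \mathcal{Y}=0\}$ restricted to $\mathcal{C}\times\mathcal{C}$, and the size of that solution set is exactly what is to be proven. To be fair, the paper asserts the same count without proof, so your reconstruction is at parity with the paper's level of rigor; but as a self-contained proof of the two-sided ``singles out'' claim, these two steps are gaps, not checks.
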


\subsection{$4$-Torsion divisors}
There exist $240$ characteristics of order 4 excluding half-integer characteristics.
Let $\mathfrak{E}$ denote the set of these $240$ characteristics.
Each characteristic of $\mathfrak{E}$ produces a $4$-torsion divisor $D_{2Q}=(x_1,y_1)+(x_2,y_2)$, 
which is non-special.
We split $\mathfrak{E}$ into two parts: 
\begin{itemize}
\item $\mathfrak{E}_{\text{non-spec}} = 
\{[\varepsilon] \in \mathfrak{E} \mid \mathcal{A}^{-1}(u[2\varepsilon]) \,{=}\, D_2,\, \deg D_2 \,{=}\, 2\}$ 
of cardinality  $160$, the divisor $D_2 \sim 2 D_{2Q}$ is characterized by its
 Mumford coordinates $\alpha_2^{[2Q]}$,  $\alpha_4^{[2Q]}$,
 $\beta_3^{[2Q]}$, $\beta_5^{[2Q]}$;

\item
$\mathfrak{E}_{\text{spec}} 
= \{[\varepsilon] \in \mathfrak{E} \mid \mathcal{A}^{-1}(u[2\varepsilon]) \,{=}\, D_1,\, \deg D_1 \,{=}\, 1\}$ of cardinality  $80$, 
the divisor $D_1 \sim 2 D_{2Q}$ is characterized by $x$-, $y$-coordinates: $D_1 = (x_{2Q},y_{2Q})$.
\end{itemize}

Divisors $D_{2Q}$ produced from characteristics of $\mathfrak{E}_{\text{non-spec}}$ 
satisfy the conditions
\begin{gather}\label{4TorsCrit}
\beta_3^{[2Q]} = 0,\qquad 
\beta_5^{[2Q]} = 0,
\end{gather}
obtained from \eqref{2kTorsCrit}. In the Mumford coordinates of 
$D_{2Q}$ the equalities \eqref{4TorsCrit} acquire the form
($\alpha_2^{[Q]} \equiv \alpha_2$,  $\alpha_4^{[Q]} \equiv \alpha_4$)
\begin{gather}\label{Beta4Tors} 
\begin{split}
&- 2 \alpha_4 -  \alpha_2^2 + \big(2 \alpha_2 - \gamma_2 + \gamma_1^2\big) \big(\gamma_2 - \gamma_1^2\big)
+  \gamma_1^2 \big(\gamma_2 - \lambda_2\big) + \gamma_4 = 0, \\
&\big({-} 2 \alpha_4 + (3 \alpha_2 - \gamma_2)(\alpha_2 - \gamma_2) 
+  \gamma_1^2  (2 \alpha_2 - \lambda_2) + 2 \gamma_4 \big) 
\big(2 \alpha_2 - \gamma_2 + \gamma_1^2\big) = \gamma_6,
\end{split}
\end{gather}
where $\gamma_k$ are defined by \eqref{gammaD2Dupl}.

Divisors $D_{2Q}$ produced from characteristics of $\mathfrak{E}_{\text{spec}}$ 
satisfy the condition $y_{2Q} \,{=}\, 0$, which follows from \eqref{2kTorsCritSpec},
and in terms of the Mumford coordinates of $D_{2Q}$ acquires the form
\begin{gather}\label{Beta4TorsSpec} 
\big(\gamma_1 (2 \alpha_2 + \gamma_1^2 - \lambda_2) + \gamma_3\big) 
\big(2 \alpha_2 + \gamma_1^2 - \lambda_2 \big) + \gamma_5 = 0,
 \tag{\ref{Beta4Tors}'}
\end{gather}
where $\gamma_k$ are defined by \eqref{GammaDuplSpec}.

In \texttt{https://community.wolfram.com/groups/-/m/t/3338527}
the reader may find an example of computing  $3$- and $4$-torsion divisors
on a genus two curve, along with derivation of the corresponding 
division polynomials in Mumford coordinates, and
in $x$-, $y$-coordinates.

\appendix

\section{Transformation to canonical curve}\label{A:CTrans}
There are several types of equations which define  a genus two curve.
\begin{enumerate}
\item[(I)] Let a genus two curve has the form
\begin{align}\label{C25EqYSft}
0 = f(x,y) &= -y^2 + y \mathcal{Q}(x) +  \mathcal{P}(x) \notag  \\
&= -y^2 + y \big(\nu_1 x^2 + \nu_3 x + \nu_5 \big) \\
&\quad\ + x^5+ \nu_2 x^4 + \nu_4 x^3  + \nu_6 x^2  + \nu_8 x + \nu_{10}.  \notag
\end{align}
By the map $y \mapsto y+\tfrac{1}{2} \mathcal{Q}(x)$ 
the curve \eqref{C25EqYSft}  transforms into the form  \eqref{C25Eq}, namely
\begin{align*}
0 = f(x,y) &= -y^2 +  \Delta(x)  \\
&= -y^2 + x^5+ \lambda_2 x^4 + \lambda_4 x^3  
+ \lambda_6 x^2  + \lambda_8 x + \lambda_{10},  \notag
\end{align*}
where $\Delta(x) = \mathcal{P}(x) + \tfrac{1}{4} \mathcal{Q}(x)^2$, and
\begin{align}
&& &\lambda_2 = \nu_2 + \tfrac{1}{4} \nu_1^2,&
&\lambda_8 = \nu_8 + \tfrac{1}{2} \nu_3 \nu_5,&  \notag \\
&& &\lambda_4 = \nu_4 + \tfrac{1}{2} \nu_1 \nu_3,&
&\lambda_{10} = \nu_{10} + \tfrac{1}{4} \nu_5^2,&\\
&& &\lambda_6 = \nu_6 + \tfrac{1}{2} \nu_1 \nu_5 + \tfrac{1}{4} \nu_3^2. \notag
\end{align}

\item[(II)] Let a genus two curve has the form
\begin{align}\label{C26Eq}
0 &= f(x,y;\lambda) = -y^2  + \bar{\mathcal{P}}(x) \\
&= -y^2  + a_0 x^6 + a_1 x^5+ a_2 x^4 + a_3 x^3  + a_4 x^2  + a_5 x + a_6. \notag
\end{align}
Let $\{e_i\}_{i=0}^5$ be roots of $\bar{\mathcal{P}}$, ordered
ascendingly in the real part, and then in the imaginary part.
Transformation to a curve in the form \eqref{C25Eq} is realized by moving
the smallest root $e_0$ to infinity, namely
\begin{gather}\label{C26toC25}
x \mapsto e_0 + \frac{\bar{\mathcal{P}}'(e_0)}
{x - \frac{1}{10} \bar{\mathcal{P}}''(e_0)},\qquad 
y \mapsto \frac{y \bar{\mathcal{P}}'(e_0)}
{\big(x - \frac{1}{10} \bar{\mathcal{P}}''(e_0) \big)^3}.
\end{gather}

If a canonical basis is chosen
as explained in \cite[\S3.4]{BerCalc24}, such a transformation of the curve does not change 
the correspondence between characteristics and divisors.

\bigskip
\item[(III)] Let a genus two curve has the form
\begin{align}\label{C26EqYSft}
0 = f(x,y) &= -y^2 + y \bar{\mathcal{Q}}(x) +  \bar{\mathcal{P}}(x) \notag  \\
&= -y^2 + y \big(\bar{b}_0 x^3 +  \bar{b}_1 x^2 + \bar{b}_2 x + \bar{b}_3 \big) \\
&\quad\ + \bar{a}_0 x^6 + \bar{a}_1 x^5+ \bar{a}_2 x^4 + \bar{a}_3 x^3  
+ \bar{a}_4 x^2  + \bar{a}_5 x + \bar{a}_6.  \notag
\end{align}
By the map $y \mapsto y+\tfrac{1}{2} \bar{\mathcal{Q}}(x)$ 
the curve \eqref{C26EqYSft}  transforms into   \eqref{C26Eq}
\begin{align*}
0 &= f(x,y;\lambda) = -y^2  + \bar{\Delta}(x) \\
&= -y^2  + a_0 x^6 + a_1 x^5+ a_2 x^4 + a_3 x^3  + a_4 x^2  + a_5 x + a_6, \notag
\end{align*}
and then by \eqref{C26toC25} to the form \eqref{C25Eq}.
\end{enumerate}

\section{Addition law on a curve with extra terms}
Addition law on a curve of the form \eqref{C25EqYSft}:
\begin{equation}
\tag{\ref{DivSumAlpha}'}
\begin{split}
\alpha_2^{[P+Q]} &= - \alpha_2^{[P]} - \alpha_2^{[Q]} + 2\gamma_2 - \gamma_1^2 +  \nu_1 \gamma_1, \\
\alpha_4^{[P+Q]} &= - \alpha_4^{[P]} - \alpha_4^{[Q]} + \big(\alpha_2^{[P]}\big)^2  + \alpha_2^{[P]} \alpha_2^{[Q]} 
+ \big(\alpha_2^{[Q]}\big)^2 \\
&\quad\ - \big(\alpha_2^{[P]} + \alpha_2^{[Q]}\big)\big(2\gamma_2 - \gamma_1^2 +  \nu_1 \gamma_1\big) \\
&\quad\  + 2 \gamma_4 + \gamma_2^2 + \big(\nu_1 \gamma_2  - \nu_2 \gamma_1 + \nu_3\big) \gamma_1.
\end{split}
\end{equation}

Duplication law:
\begin{equation}
\tag{\ref{DivSumAlphaDupl}'}
\begin{split}
\alpha_2^{[2Q]} &= - 2\alpha_2^{[Q]}  + 2\gamma_2 - \gamma_1^2 +  \nu_1 \gamma_1, \\
\alpha_4^{[2Q]}  &= -2 \alpha_4^{[Q]}  + 3 (\alpha_2^{[Q]} )^2 - 2 \alpha_2^{[Q]}  
 \big(2\gamma_2 - \gamma_1^2 +  \nu_1 \gamma_1\big)\\
&\quad\  + 2 \gamma_4 + \gamma_2^2 
+ \big(\nu_1 \gamma_2  - \nu_2 \gamma_1 + \nu_3\big) \gamma_1,
\end{split}
\end{equation}


\end{document}